\newtheorem{theorem}{Theorem}[section]
\newtheorem*{theorem*}{Theorem}
\newtheorem{lemma}[theorem]{Lemma}
\newtheorem*{proposition*}{Proposition}
\newtheorem{corollary}[theorem]{Corollary}
\theoremstyle{definition}
\newtheorem{definition}[theorem]{Definition}
\newtheorem{remark}[theorem]{Remark}
\newtheorem*{conjecture*}{Conjecture}
\newtheorem*{notation*}{Notation}
\newtheorem{example}[theorem]{Example}
\numberwithin{equation}{section}
\def\1{1\kern-.3em1}
\newcommand{\Z}{{\mathbb Z}}
\newcommand{\C}{{\mathbb C}}
\newcommand{\U}{{\rm U}}
\newcommand{\gl}{{\mathfrak{gl}}}
\newcommand{\fg}{{\mathfrak g}}
\newcommand{\fu}{{\mathfrak u}}
\newcommand{\fl}{{\mathfrak l}}
\newcommand{\fp}{{\mathfrak p}}
\newcommand{\id}{{\rm{id}}}
\newcommand{\Hom}{{\rm{Hom}}}
\newcommand{\End}{{\rm{End}}}
\newcommand{\CF}{{\mathcal F}}
\newcommand{\CW}{{\mathscr W}}
\newcommand{\Cl}{{\mathcal{C}l}}
\newcommand{\Der}{{\rm Der}}
\newcommand{\Sym}{{\rm{Sym}}}
\newcommand{\sdim}{{\rm{sdim}}}
\newcommand{\im}{{\rm{im\, }}}
\newcommand{\HC}{\widehat{C}}
\newcommand{\HH}{\widehat{H}}
\def\ad{{\rm ad}}
\keywords{Lie superalgebra cohomology, Weyl superalgebras, integral forms}
\subjclass[2010]{17B56, 18G35 (primary), 81R05 (secondary)}
\begin{document}
\title[Mixed cohomology of Lie superalgebras]
{Mixed cohomology of Lie superalgebras}
\author[Yucai Su]{Yucai Su}
\address{Department of Mathematic, Tongji University,  Shanghai, China}
\email{ycsu@tongji.edu.cn}

\author[R. B. Zhang]{R. B. Zhang}
\address{School of Mathematics and Statistics,
The University of Sydney, Sydney, NSW 2006,  Australia}
\email{ruibin.zhang@sydney.edu.au}

\begin{abstract}
We investigate a new cohomology of Lie superalgebras, which may be compared to
a de Rham cohomology of Lie supergroups involving both differential and integral forms.
It is defined by a BRST complex of Lie superalgebra modules,
which is formulated in terms of a Weyl superalgebra and
incorporates inequivalent representations of the bosonic Weyl subalgebra.
The new cohomology includes the standard Lie superalgebra cohomology as a special case.
Examples of new cohomology groups are computed.
\end{abstract}
\maketitle


\section{Introduction}\label{sect:intro}

The de Rham cohomology of a Lie group can be equivalently reformulated  in terms of the cohomology of its Lie algebra \cite{CE} (also see \cite[\S 7]{We} for Lie algebra cohomology).  This fact had profound impact on the development of Lie theory.
One may try to cast a de Rham theory of a Lie supergroup into a similar algebraic setting by using the cohomology of its Lie superalgebra. However, this runs into the difficulty that the Lie superalgebra cohomology   \cite{F, FL, SchZ} studied so far in the literature is not adequate for this purpose. The problem is rooted in supergeometry, thus is not present in the Lie group context.

To see the cause of the problem, we note the following fact about supermanifolds
(see, e.g., \cite{DM, V} for introductions), which is not widely known.
Beside differential forms, there also exist integral forms and mixtures of the two types of forms \cite{BL, W} on supermanifolds. They are all necessary for defining integration, and in particular, for establishing a Stokes' theorem \cite{BL}.
A generalisation of de Rham theory to supermanifolds should take into account  differential-integral forms to capture new features of supergeometry.
However,  the cohomology of Lie superalgebras \cite{F, FL, SchZ} in the literature is a direct generalisation of the cohomology of Lie algebras \cite{CE}\cite[\S 7]{We}.
It is usually defined by a generalised Chevalley-Eilenberg complex \cite{SchZ} (also see Section \ref{sect:cohomology}), which corresponds to a complex of differential forms (tensored with a coefficient module). The integral forms are entirely discarded.

Our aim is to develop a Lie superalgebra cohomology which will take into full account of differential-integral forms.  We will call this a {\em mixed cohomology of Lie superalgebras}.

We now describe in more detail the background of the current work, and also outline the main ideas and techniques involved.

\subsubsection*{Differential-integral forms on supermanifolds}

Let us recall the elementary treatment of differential-integral forms on supermanifolds by Witten \cite{W}.
Given a supermanifold $M$ with the tangent bundle $TM$,  denote by $\Pi{TM}$ the tangent bundle with the parity of the fibre space reversed.  Thus the odd directions of the fibre space of $\Pi{TM}$ come from the even coordinates of $M$ (in particular,  for an ordinary manifold, the fibre of $\Pi{TM}$ is purely odd), and the even directions from the Grassmannian variable coordinates. The forms on $M$ are functions on $\Pi{TM}$: differential forms are polynomial functions, and integral forms \cite{BL} are distributions supported at $0$ in the even directions of the fibre space of $\Pi{TM}$. Mixtures of the two types of (generalised) functions are differential-integral forms.

\subsubsection*{Modules for Weyl superalgebras}

A key observation in  \cite[\S 3.2]{W} is that differential-integral forms at a point of a supermanifold $M$ constitute a module over a Weyl superalgebra. This Weyl superalgebra is generated by the coordinates of the fibre of $\Pi{TM}$ and their derivatives, thus is equal to the tensor product of a Clifford algebra of even degree and a bosonic Weyl algebra.
While the even degree Clifford algebra has a unique irreducible module (the fermionic Fock space) up to canonical isomorphisms,  the Weyl algebra has many non-isomorphic simple modules.
Different modules correspond to different functions on $\Pi{TM}$.  To see this, we note that
the bosonic Fock space for the Weyl algebra is cyclically generated by a vacuum vector, which is annihilated by all the even derivatives. Vectors in it correspond to differential forms. However, one may consider a module for the Weyl algebra cyclically generated by a vector which is annihilated by some even variables.  Such a vector is a distribution in these variables supported at $0$ (see Example \ref{ex:1-dim}). Applying the corresponding derivatives to the vector produces more distributions. These correspond to integral forms  \cite[\S 3.2]{W}.

\subsubsection*{Differential-integral forms in the Lie superalgebra context}

Conceptually we may regard a Lie superalgebra $\fg$ as the Lie superalgebra of left invariant tangent vector fields on the underlying supermanifold of a Lie supergroup $G$. By \cite{W}, the differential-integral forms at a point of $G$ are (generalised) functions on the parity reversed superspace $\Pi\fg$ of $\fg$. They  can be described in terms of modules for the Weyl superalgebra $\CW_{\Pi\fg^*}$ over $\Pi\fg^*=(\Pi\fg)^*$, the dual superspace of $\Pi\fg$.
We now apply such differential-integral forms to develop a theory of mixed cohomology of Lie superalgebras.

\subsubsection*{The BRST formalism}

Useful techniques for doing this are available in the physics literature, which originated from a method for quantising gauge theories known as the BRST formalism (see, e.g., \cite{BBH}), first  introduced by Becchi, Rouet and Stora,  and by Tyutin. The BRST method has since developed into a vast theory, which has been applied to many other areas with remarkable success, for example, to string theory, symplectic geometry and semi-infinite cohomology of affine Kac-Moody algebras.
We shall adapt the BRST method to implement the ideas of \cite{W} discussed above to Lie superalgebras.
As we will see in Section \ref{sect:mixed-cohomology}, the correspondence between differential-integral forms and modules for Weyl superalgebras \cite[\S 3.2]{W} becomes even more natural within the BRST framework.

\subsubsection*{Mixed cohomology of Lie superalgebras}

We define a mixed cohomology of a Lie superalgebra $\fg$ by a BRST complex of $\fg$-modules in Theorem \ref{thm:main}, which is formulated in terms of modules for the Weyl superalgebra $\CW_{\Pi\fg^*}$ over $\Pi\fg^*$.
The Weyl superalgebra is the tensor product of a Clifford subalgebra and a bosonic Weyl subalgebra, where the latter is not equal to $\C$ if the Lie superalgebra $\fg$ has a non-trivial odd subspace.  In this case,
$\CW_{\Pi\fg^*}$ has non-isomorphic simple modules $\CF_{\Pi\fg^*}(D)$, which are characterised by $\fg$-submodules $D$ of  $\Pi\fg^*$ called {\em mixing sets}.
These non-isomorphic simple modules for $\CW_{\Pi\fg^*}$ account for the differential-integral forms in the sense of \cite[\S3.2]{W} (see Section \ref{sect:forms}).
The BRST complex defining the mixed cohomology of $\fg$ with coefficients in a $\fg$-module $V$ has the superspace of cochains $C_D(\fg, V)=\CF_{\Pi\fg^*}(D)\otimes V$.
The standard Lie superalgebra cohomology \cite{F, FL, SchZ} is recovered  in
Theorem \ref{thm:reform} from the mixed cohomology in the special case $D=0$,
where $\CF_{\Pi\fg^*}(0)$ is the standard Fock space.

\subsubsection*{Examples}
To illustrate how the general theory of mixed cohomology works,  we calculate as examples various mixed $\fu$-cohomology groups of the general  linear Lie superalgebra in Section \ref{sect:compute}.  Some of these cohomology groups are highly non-trivial. It is the BRST method which enables us to carry out such computations explicitly.

\medskip

Now some comments are in order.

While the BRST method is the most direct way to reach Definition \ref{def:mixed} of the mixed cohomology of Lie superalgebras,  it is still very useful to reformulate the definition using more conventional homological algebraic methods \cite{We}, e.g., in terms of derived functors in a way analogous to \cite{Z04}.

We also mention that the mixed cohomology of Lie superalgebras  is a very natural object to study; one could have discovered it much earlier by simply placing the standard Lie superalgebra cohomology \cite{F, FL, SchZ} in the BRST framework.

We work over  the field $\C$ of complex numbers throughout.

\section{Standard  cohomology of  Lie superalgebras}\label{sect:standard-cohomology}

We recall the definition of the cohomology of Lie superalgebras \cite{FL, F, SchZ} here.  As we shall see in Section \ref{sect:mixed-cohomology}, it can be recovered from the BRST formalism for mixed cohomology.

\subsection{Vector superspaces}\label{sect:superspaces}

A vector superspace $V$ is a $\Z_2$-graded vector space $V=V_0\oplus V_1$, where $V_0$ and $V_1$ are called the even and odd subspaces respectively. Here $\Z_2:=\Z/2\Z$ is considered as an additive group. The degree of a homogeneous element $v\in V_0\cup V_1$ will be called the parity of $v$, and denoted by $[v]$.  Let $V$ and $W$ be any two vector superspaces.
The set $\Hom_\C(V, W)$ of homomorphisms is a vector superspace with
$\Hom_\C(V, W)_i = \oplus_{j+k = i } \Hom(V_j, W_k)$ for all $i\in\Z_2$.
The tensor product $V\otimes_\C W$ is also a vector superspace with
$(V\otimes_\C W)_i  =  \oplus_{j+k = i } V_j\otimes_\C W_k$.
The category of vector superspaces is a tensor category equipped with a canonical symmetry
\[
\tau_{V, W}: V\otimes W\longrightarrow W\otimes V, \quad v\otimes w \mapsto (-1)^{[v][w]} w\otimes v.
\]

Various types of algebras in this category will be called superalgebras of the corresponding types, e.g., associative superalgebras, Hopf superalgebras, and Lie superalgebras (see \cite{K, Sch} for the theory of Lie superalgebras).
We will consider only  $\Z_2$-graded modules for any superalgebra.

The dual space of $V$ is $V^*:=\Hom_\C(V, \C)$, which  is also $\Z_2$-graded.
Note that $V^*\otimes W^*$ is embedded in $(V\otimes W)^*$ such that for all $f\in V^*$ and $g\in W^*$,
\[
(f\otimes g)(v, w) = (-1)^{[g][v]}f(v) g(W), \qquad \forall\, v\in V, \ w\in W.
\]
If one of the vector superspaces is finite dimensional, this is an isomorphism.

There exists the parity change functor $\Pi$ on the category of vector superspaces, which acts as identity on morphisms,  and reverses the parity on objects, that is, $(\Pi V)_0=V_1$ and $(\Pi V)_1=V_0$ for any vector superspace $V$.  Now we have a canonical odd isomorphism $\pi: V\longrightarrow \Pi{V}$ of  vector superspace, which sends any homogeneous element of $V$ to the same element in $\Pi V$ but with the opposite parity.

Denote by $\Sym_r$ the symmetric group of degree $r$,
and by $\C\Sym_r$ its group algebra.
The symmetry $\tau_{V, V}: V\otimes V\longrightarrow V\otimes V$ extends to a representation
$
\nu_r: \C\Sym_r\longrightarrow \End_\C(V^{\otimes r})
$
of $\C\Sym_r$ on the $r$-th tensor power of $V$ such that for all the simple reflections $s_i = (i, i+1)$ for $1\le i\le i-1$,
\begin{eqnarray}\label{eq:symm-rep}
\nu_r: s_i \mapsto \tau_i = \underbrace{\id_V\otimes\dots\otimes\id_V}_{i-1}\otimes\tau_{V, V} \otimes\underbrace{\id_V\otimes\dots\otimes\id_V}_{r-i-1}, \quad \forall\, i.
\end{eqnarray}
Write $\ell(\sigma)$ for the length of an element $\sigma\in\Sym_r$, and let
\[
S_r= \frac{1}{r!}\sum_{\sigma\in\Sym_r}\sigma, \qquad \Sigma_r= \frac{1}{r!}\sum_{\sigma\in\Sym_r} (-1)^{\ell(\sigma)}\sigma.
\]
Then the symmetric and skew-symmetric $r$-th power of $V$ are respectively given by
\begin{eqnarray}\label{eq:S-Wedge}
S^r(V)=S_r(V^{\otimes r}), \qquad  {\bigwedge}^r(V)=\Sigma_r(V^{\otimes r}).
\end{eqnarray}
We have the following $\Z$-graded superspaces.
\begin{eqnarray}\label{eq:sym-skew}
S(V)=\bigoplus_{r=0}^\infty S^r(V), \qquad  \bigwedge(V)=\bigoplus_{r=0}^\infty{\bigwedge}^r(V).
\end{eqnarray}

\subsection{Cohomology of Lie superalgebras}\label{sect:cohomology}

Let us briefly discuss the standard Lie superalgebra cohomology following \cite[\S II]{SchZ}.
We refer to \cite{K, Sch} for the theory of Lie superalgebras.

Let $\fg=\fg_0\oplus\fg_1$ be a Lie superalgebra with the  super Lie bracket ${[\ , \ ]}: \fg\times\fg \longrightarrow \fg$.
Given any $\Z_2$-graded left modules $V$ and $W$ for $\fg$ (we only consider $\Z_2$-graded modules), their tensor product $V\otimes W$ is again a $\fg$-module with the diagonal action. The canonical symmetry  $\tau_{V, W}: V\otimes W \longrightarrow W\otimes V$ is a $\fg$-homomorphism.  This in particular implies that $S^r(W)$ and ${\bigwedge}^r(W)$ are $\fg$-submodules of $W^{\otimes r}$.
The dual vector superspace $W^*$ of a $\fg$-module $W$ has a $\fg$-module structure
defined by
$
(A f)(v) = - (-1)^{[A][f]}f(Av)
$
for all $A\in\fg$, $f\in W^*$ and $v\in W$.

Fix a $\fg$-module $V$, we consider the family of $\fg$-modules
\[
C^p(\fg, V) := {\bigwedge}^p(\fg^*)\otimes V,  \quad p\in\Z_+.
\]
For convenience, we also take $C^q(\fg, V)=0$ for all $q<0$.   The $\Z_2$-gradings of  $S^p(\Pi\fg^*)$ and $V$ naturally give rise to a $\Z_2$-grading for $C^p(\fg, V)$.
Define the following contraction maps \cite[(II.15)]{SchZ}
\[
\begin{aligned}
i_p:\ \ & C^p(\fg, V)\otimes \fg \longrightarrow C^{p-1}(\fg, V), \quad g\otimes A\mapsto g_A, \\
&g_A(A_1, A_2, \dots, A_{p-1}) = g(A, A_1, A_2, \dots, A_{p-1}),  \quad \forall\, A_i\in \fg,
\end{aligned}
\]
where for $g=a\otimes v\in C^p(\fg, V)$ with $a\in{\bigwedge}^p(\fg^*)$ and $v\in V$,
\[
g(A_1, A_2, \dots, A_p) = (-1)^{[v]\sum\limits_{i=1}^p[A_i]} a(A_1, A_2, \dots, A_p) v.
\]
These maps are $\fg$-homomorphisms.
We further define maps \cite[(II.18)]{SchZ}
\begin{eqnarray}\label{eq:d}
d^p: C^p(\fg, V) \longrightarrow C^{p+1}(\fg, V), \quad p=0, 1,  2, \dots,
\end{eqnarray}
inductively by
$
(d^p g)_A = (-1)^{[A][g]} A.g - d^{p-1} g_A$  for all $A\in \fg.
$

There exists an explicit formula for these maps, which can be described as follows \cite[(II.21)]{SchZ}.
Define linear maps $d^p_1, d^p_2:  C^p(\fg, V) \longrightarrow C^{p+1}(\fg, V)$ such that for any $g\in C^p(\fg, V)$ and for all $A_0, A_1, \dots, A_p\in\fg$,
\begin{align}
&(d^p_1 g)(A_0, A_1, \dots, A_p) = \sum\limits_{r=0}^p (-1)^{r+\Sigma_{r-1, r}}  A_r g(A_0, \dots, \hat{A}_r, \dots, A_p),  \label{eq:d-1}\\
&(d^p_2 g)(A_0, A_1, \dots, A_p)   \label{eq:d-2} \\
&\phantom{(d^p_2 g)}= \sum\limits_{r<s} (-1)^{s+ \Sigma_{r, s} - \Sigma_{s-1, s}}
g(A_0, \dots, A_{r-1}, [A_r, A_s], A_{r+1}, \dots, \hat{A}_s, \dots, A_p), \nonumber
\end{align}
with $\Sigma_{k, \ell}=[A_\ell] \big([g]+ \sum_{i=0}^k [A_i]\big)$ for $k<\ell$.
Then one can readily show that
\begin{eqnarray}
d^p = d^p_1+d^p_2, \quad \forall\, p.
\end{eqnarray}
The maps $d^p$ are $\fg$-homomorphisms which are even with respect to the $\Z_2$-grading. Furthermore, they satisfy $d^{p+1} d^p =0$ for all $p$.

\begin{definition}\cite{FL, SchZ} \label{def:standard}
The cohomology of  a Lie superalgebra $\fg$ with coefficients in a $\fg$-module $V$ is the homology of the differential complex
\[
(C(\fg, V), d): \qquad C^0(\fg, V) \stackrel{d^0}\longrightarrow C^1(\fg, V) \stackrel{d^1}\longrightarrow C^2(\fg, V)
\stackrel{d^2}\longrightarrow \cdots.
\]
Denote the  cohomology groups of $(C(\fg, V), d)$ by
$H^p(\fg, V) = \frac{\ker d^p}{\im d^{p-1}}$ for $p\in\Z_+$.
\end{definition}
To distinguish it from the mixed cohomology to be defined later,
we call this the standard cohomology of Lie superalgebras.
It has been much studied (see, e.g., \cite{CZ, F, GS, PS, S, SuZ}), and has wide applications,
for example, in the study of deformations of universal enveloping superalgebras such as quantum supergroups \cite{BGZ, Z02, Z04},  extensions of Lie superalgebras and modules \cite{SchZ, SuZ}, and support varieties of Lie supergroups \cite{BKN, LNZ}.  Another important application is in the solution of the character problem \cite{B, GS, PS, S, SZ} of classical Lie superalgebras, which relied on results on the super analogue of Kostant's $\fu$-cohomology \cite{CZ, PS, S} for specifically chosen parabolic sub superalgebras.  This super $\fu$-cohomology is well known to be deeply rooted in the geometry of  homogeneous superspaces \cite{PS, Z02}.

\begin{remark}
The complex in Defintion \ref{def:standard} reduces to the usual Chevalley-Eilenberg complex \cite[\S7.7]{We} if $\fg$ is an ordinary Lie algebra.
\end{remark}

\section{Mixed cohomology of Lie superalgebras}\label{sect:mixed-cohomology}

We present the BRST formulation of the mixed cohomology of Lie superalgebras in this section. As we shall see in Section \ref{sect:BRST-old}, the standard cohomology of Lie superalgebras discussed in Section \ref{sect:standard-cohomology} is a special case of the mixed cohomology.

\subsection{Weyl superalgebras}

Given a vector superspace $V$, the $\Sym_r$-representation
on $V^{\otimes r}$ defined by \eqref{eq:symm-rep}  is semi-simple. Thus $S^r(V)$ is a direct summand of $V^{\otimes r}$ as $\Sym_r$-module, and we may also define $S^r(V)$ as the quotient of $V^{\otimes r}$ by the complement. The skew symmetric power ${\bigwedge}^r(V)$ can be similarly defined as a quotient.  This gives an alternative description of $S(V)$ and $\bigwedge(V)$, which has the advantage of making their superalgebraic structures transparent.

Denote by $T(V)$ the tensor algebra of $V$. Let $J_S$ be the two-side ideal  generated by $v\otimes v' - \tau(v\otimes v')$ for all $v, v'\in V$, and let $J_\wedge$ be the two-side ideal  generated by $v\otimes v' + \tau(v\otimes v')$ for all $v, v'\in V$.  Then
\[
S(V)= T(V)/J_S, \qquad \bigwedge(V)=T(V)/J_\wedge.
\]
Hence both $S(V)$ and $\bigwedge(V)$  are associative algebras.

The tensor algebra $T(V)$ has a natural $\Z_+$-grading with $V$ being the degree $1$ subspace.  As $J_S$ and $J_\wedge$ are homogeneous, the algebras $S(V)$ and $\bigwedge(V)$  are both $\Z_+$-graded with the homogeneous subspaces given in \eqref{eq:sym-skew}.
The $\Z_2$-grading of $V$ induces a  $\Z_2$-grading for $T(V)$. Since both $J_S$ and $J_\wedge$ are $\Z_2$-graded ideals, $S(V)$ and $\bigwedge(V)$ are naturally $\Z_2$-graded.

Let $V^*$ be the dual vector superspace of $V$. We consider the  tensor algebra $T(V\oplus V^*)$ over $V\oplus V^*$. Let $J$ be the two-side ideal generated by the following elements
\[
v\otimes w - \tau_{V, V}(v\otimes w), \quad
{\bar v}\otimes {\bar w} - \tau_{V^*, V^*}({\bar v}\otimes \bar{w}), \quad
{\bar v}\otimes w - \tau_{V^*, V}({\bar v}\otimes w) - {\bar v}(w),
\]
for $v, w\in V$ and ${\bar v},  {\bar w}  \in V^*$.  Note that this ideal is $\Z_2$-graded.
%
The Weyl superalgebra of $V$ is the associative superalgebra
\[
\CW_V:= T(V\oplus V^*)/J.
\]
It is $\Z$-graded with $\deg(V)=1$ and $\deg(V^*)=-1$.
Note that  $\CW_V \cong  S(V)\otimes S(V^*)$ as vector superspaces.

For $V=V_0\oplus V_1$, we may regard $V_0$ (resp. $V_1$) as a purely even (resp. odd) vector superspace, and consider $\CW_{V_0}$ and $\CW_{V_1}$. Then $\CW_{V_0}$ is the usual Weyl algebra over $V_0$, and $\CW_{V_1}$ is the Clifford algebra generated by $V_1\oplus V^*_1$, thus is of even degree. We have
 $\CW_V=\CW_{V_0}\otimes\CW_{V_1}$ as superalgebra.

If $d_1:=\dim V_1<\infty$, the Clifford algebra $\CW_{V_1}$ has a unique simple module,  the fermionic Fock space, which is $2^{d_1}$-dimensional and is given by
\begin{eqnarray*}
\CF_{V_1}(0)=\frac{\CW_{V_1}}{\sum\limits_{\partial \in {V_1}^*}\CW_{V_1}\partial} = S(V_1),
\end{eqnarray*}
where $S(V_1)$ is defined by \eqref{eq:S-Wedge} with $V_1$ regarded as a purely odd vector superspace.
We may also construct a simple module, say, by taking $\CF_{V_1}(V_1):=\frac{\CW_{V_1}}{\CW_{V_1}V_1} = S(V_1^*)$, but this does not lead to anything new, since $\CF_{V_1}(V_1)\cong\CF_{V_1}(0)$ with the isomorphism determined by a vector space isomorphism  $S^0(V_1^*)\cong S^{d_1}(V_1)$ which is unique up to scalar multiples.

On the other hand, there are many non-isomorphic irreducible representations of the Weyl algebra $\CW_{V_0}$ even when $V_0$ is finite dimensional.  Given any subspace $D_0\subset V_0$, we let $D_0^\perp =\{{\bar v}\in V_0^*\mid {\bar v}(w)=0, \ \forall\, w\in D_0\}$.  Then we have the corresponding simple $\CW_{V_0}$-module
\[
\CF_{V_0}(D_0)=\frac{\CW_{V_0}}{\sum\limits_{\partial\in D_0^\perp} \CW_{V_0} \partial+ \sum\limits_{v\in D_0} \CW_{V_0} v}.
\]
In the extreme case with $D_0=0$, the module $\CF_{V_0}(0)$ is the standard Fock space of the Weyl algebra, where the vacuum vector is $1$.

\begin{example} \label{ex:1-dim}
Consider the case $V=\C x$ with $x$ being even. Then $\CW_V$ as an associative algebra is generated by $x$ and $\partial=\frac{d}{d x}$ subject to the relation $\partial x - x \partial =1$.  We have
\[
\CF_V(0)=\C[x], \quad \CF_V(V)=\C[\partial].
\]

We can interpret $\C[\partial]$ as a space of distributions in $x$ supported at $0$. To do this, we will regard $x$ as a real variable, and consider complex valued functions $f(x)$ in $x$.
Let $\delta(x)$ be Dirac's delta-function, which satisfies
\begin{eqnarray}\label{eq:derivative-delta}
\int\limits_{-\infty}\limits^\infty f(x) \frac{d^k\delta(x)}{d x^k}dx = (-1)^k \frac{d^k f(0)}{d x^k}, \quad k=0, 1, \dots,
\end{eqnarray}
for any function $f(x)$ such that its $k$-order derivative $\frac{d^k f(0)}{d x^k}$ exists at $0$.

Let $\mathcal{D}=\sum_{k\ge 0} \C\frac{d^k\delta(x)}{d x^k}$, and endow it with a $\Z$-grading such that $\frac{d^k\delta(x)}{d x^k}$ is at degree $-k$.  Then $\mathcal{D}$ has the structure of a $\Z$-graded $\CW_V$-module with
\[
\partial\frac{d^k \delta(x)}{d x^k} = \frac{d^{k+1}\delta(x)}{d x^{k+1}}, \qquad x \frac{d^k\delta(x)}{d x^k} = - k \frac{d^{k-1}\delta(x)}{d x^{k-1}},
\]
where the first relation is by definition, and the second can be verified by using \eqref{eq:derivative-delta}.
Therefore, the linear map  $\C[\partial]\longrightarrow \mathcal{D}$ defined by
\[
\partial ^k \mapsto  \frac{d^k\delta(x)}{d x^k}, \quad k=0, 1, \dots,
\]
gives rise to  an isomorphism of $\Z$-graded $\CW_V$-modules.
\end{example}

This example is a special case of what discussed in \cite[\S 3.2.2, \S 3.2.3]{W}.
The generalisation of this example to mixed Fock spaces of arbitrary Weyl superalgebras is immediate, see \cite[\S 3.2.2, \S 3.2.3]{W} for a detailed discussion.

Now we consider representations of the Weyl superalgebra $\CW_V$.  Let $D\subset V$ be a sub superspace, and let
$D^\perp =\{{\bar v}\in V^*\mid {\bar v}(w)=0, \ \forall\, w\in D\}$.
\begin{lemma} \label{lem:F-gen}
Corresponding to each sub superspace $D$ of $V$, there exists  a simple left $\CW_V$-module $\CF_V(D)$ defined by
\[
\CF_{V}(D)=\frac{\CW_{V}}{\sum\limits_{\partial\in D^\perp} \CW_{V} \partial+ \sum\limits_{v\in D} \CW_{V} v}.
\]
This is a $\Z$-graded $\CW_V$-module which is also $\Z_2$-graded.
Furthermore, two  modules $\CF_V(D)$ and $\CF_V(D')$ are not isomorphic if $D_0\ne D'_0$, where $D_0$ and $D'_0$ are the even subspaces of $D$ and $D'$ respectively.
\end{lemma}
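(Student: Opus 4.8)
The plan is to reduce everything to an explicit Fock-space model and then read off the three assertions from it. First I would split the problem according to parity. Writing $D=D_0\oplus D_1$ with $D_0\subset V_0$ and $D_1\subset V_1$, one checks that $D^\perp=D_0^\perp\oplus D_1^\perp$ and that, under the factorisation $\CW_V=\CW_{V_0}\otimes\CW_{V_1}$, the defining left ideal splits accordingly, giving
\[
\CF_V(D)\;\cong\;\CF_{V_0}(D_0)\otimes\CF_{V_1}(D_1)
\]
as $\CW_V$-modules. The bosonic factor $\CF_{V_0}(D_0)$ is one of the $\CW_{V_0}$-modules introduced just before the lemma, and the fermionic factor $\CF_{V_1}(D_1)$ is a module for the Clifford algebra $\CW_{V_1}$.

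Next I would produce a PBW basis. Choosing a homogeneous basis $e_1,\dots,e_n$ of $V$ adapted to $D$ (so that $e_1,\dots,e_k$ span $D$ and $e_{k+1},\dots,e_n$ project to a basis of $V/D$), with dual basis $e_1^*,\dots,e_n^*$, the cyclic image $\bar1$ of $1$ is annihilated exactly by the $e_i$ with $i\le k$ and by the $e_j^*$ with $j>k$, while the surviving creation operators are the $e_i$ ($i>k$) and the $e_j^*$ ($j\le k$). Using the Weyl relations to normally order, every element of $\CF_V(D)$ is a combination of monomials in these surviving operators applied to $\bar1$; to see that such monomials are linearly independent I would realise $\CF_V(D)$ concretely on $S(V/D)\otimes S(D^*)$, letting the $e_i$ ($i>k$) and $e_j^*$ ($j\le k$) act by multiplication and the annihilators act by the corresponding commuting derivations, and then verify the defining relations of $\CW_V$ together with the vacuum conditions for $1\otimes1$. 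The resulting surjection $\CF_V(D)\to S(V/D)\otimes S(D^*)$ carries the spanning monomials to a basis, hence is an isomorphism.

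Simplicity then follows either by the standard vacuum-extraction argument in this model---given a nonzero element, apply annihilators to isolate a nonzero multiple of $\bar1$ and then regenerate the whole module by creation operators---or, more quickly, by combining the simplicity of the two tensor factors (the bosonic factor is simple by the statement preceding the lemma, with $\End_{\CW_{V_0}}\CF_{V_0}(D_0)=\C$ because $\CW_{V_0}$ has countable dimension over $\C$; the fermionic factor is the unique simple Clifford module) with the fact that a tensor product of simple modules with scalar endomorphism rings is simple over the tensor product algebra. The $\Z$- and $\Z_2$-gradings are immediate: the generators $D\subset V$ and $D^\perp\subset V^*$ are homogeneous for both gradings (of $\Z$-degrees $+1$ and $-1$), so the defining left ideal is graded and $\CF_V(D)$ inherits both gradings.

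The substantive point is the non-isomorphism claim, where the idea is to extract an isomorphism invariant that recovers $D_0$. Call $v\in V_0$ \emph{locally nilpotent} on a module $M$ if for each $m\in M$ one has $v^Nm=0$ for some $N$; this property is manifestly preserved by $\CW_V$-isomorphisms, and since an element of $V_0$ acts only on the bosonic tensor factor it is unaffected by $\CF_{V_1}(D_1)$. I would then show in the model that
\[
\{\,v\in V_0 : v \text{ acts locally nilpotently on }\CF_V(D)\,\}=D_0 .
\]
Indeed, writing $v=\sum_i c_ie_i$, the $e_i$ with $i\le k$ act as commuting locally nilpotent derivations while the $e_i$ with $i>k$ act as multiplications by independent coordinates; as the derivations annihilate the vacuum and every monomial in the $e_i$ ($i>k$), one gets $v^N(1\otimes1)=\big(\sum_{i>k}c_ie_i\big)^N(1\otimes1)$, which is nonzero whenever some $c_i$ with $i>k$ is nonzero, so $v$ is locally nilpotent exactly when $v\in D_0$. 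Hence $\CF_V(D)\cong\CF_V(D')$ forces $D_0=D_0'$, the contrapositive of the assertion. I expect this invariant to be the main obstacle: simplicity and the gradings are routine, whereas pinning down a module-theoretic quantity that separates the infinite family $\{\CF_{V_0}(D_0)\}_{D_0}$ and verifying its independence of the fermionic factor is where the genuine content lies.
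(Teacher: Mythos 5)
Your argument is correct, and it is worth noting that the paper itself offers no proof of this lemma at all---it is stated and immediately followed by the naming of the mixed Fock spaces---so there is no ``paper approach'' to compare against; your write-up actually supplies the missing justification. The three ingredients you use are the right ones: the factorisation $\CF_V(D)\cong\CF_{V_0}(D_0)\otimes\CF_{V_1}(D_1)$ compatible with the splitting $\CW_V=\CW_{V_0}\otimes\CW_{V_1}$ already noted in the paper; the explicit model $S(V/D)\otimes S(D^*)$ with its PBW spanning set, which gives both gradings and feeds the vacuum-extraction proof of simplicity; and, most importantly, the local-nilpotence invariant $\{v\in V_0 : v \text{ acts locally nilpotently}\}=D_0$, which is exactly the kind of module-theoretic quantity needed to separate the $\CF_V(D)$ for different $D_0$ and which correctly ignores the Clifford factor since $V_0$ acts as $(\text{bosonic operator})\otimes\mathrm{id}$. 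Two small cautions: the shortcut via ``tensor product of simples with scalar endomorphism rings is simple'' relies on $\End_{\CW_{V_0}}\CF_{V_0}(D_0)=\C$, which the countable-dimension (Dixmier--Schur) argument gives only when $V$ has at most countable dimension---the paper explicitly allows $\dim\fg$ to be infinite---so in full generality you should lean on the direct vacuum-extraction argument, which works verbatim because any given element involves only finitely many variables; and in computing $v^N(1\otimes 1)$ you implicitly use that all elements of $V_0$ commute as operators (true in $\CW_V$ since $[v,w]=0$ for $v,w\in V$), which is worth saying explicitly so the reduction to $\bigl(\sum_{i>k}c_ie_i\bigr)^N$ is transparent.
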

We call $\CF_{V}(D)$ a  {\em mixed Fock space} for $\CW_V$, and call $D$ the {\em mixing set} of $\CF_{V}(D)$.
The usual Fock space for $\CW_V$ is $\CF_{V}(0)$, and the {\em dual Fock space} is
$\CF_{V}(V)$.

\subsubsection{Comments on parity reversal}\label{sect:symm-skew-Pi}

Let $\Pi{V}$ be the parity reversed vector superspace of $V$.
Then as  $\Z$-graded associative algebras, $S(V)$ and  $\bigwedge(V)$ satisfy
\begin{eqnarray}\label{eq:duality-1}
\bigwedge(V) = S(\Pi V), \quad S(V)=\bigwedge(\Pi V).
\end{eqnarray}
Recall from Section \ref{sect:superspaces} that $S(V)$ and $\bigwedge(V)$ inherit $\Z_2$-gradings from $T(V)$,  thus are superalgebras. There is now a slight complication in that
$\bigwedge(V)$ has a different $\Z_2$-grading from that of $S(\Pi V)$, but this is not a disaster.

The $\Z$-graded algebra isomorphism between $\bigwedge(V)$ and $ S(\Pi V)$ can be interpreted as a superalgebra isomorphism,  which however is not homogeneous.  Let  $\pi: V\longrightarrow \Pi{V}$  be the canonical odd isomorphism, which sends any homogeneous element of $V$ to the same element in $\Pi V$ but with the opposite parity. This induces an isomorphism of superalgebras $T(V) \longrightarrow T(\Pi V)$ which is of degree $\alpha$ on the subspace $\oplus_{k=0}^\infty T(V)_{2k+\alpha}$ for $\alpha=0, 1$.  It in turn induces the superalgebra isomorphism between $\bigwedge(V)$ and $S(\Pi V)$, which is what we are after.

We may define a Clifford superalgebra by $\Cl_V = T(V\oplus V^*)/\tilde{J}$, where $\tilde{J}$ is the two-sided ideal generated by the elements
\[
v\otimes w + \tau_{V, V}(v\otimes w), \quad
{\bar v}\otimes {\bar w} + \tau_{V^*, V^*}({\bar v}\otimes \bar{w}), \quad
{\bar v}\otimes w + \tau_{V^*, V}({\bar v}\otimes w) - {\bar v}(w),
\]
for $v, w\in V$ and ${\bar v},  {\bar w}  \in V^*$. Then   as $\Z$-graded algebras,
\begin{eqnarray}\label{eq:duality-2}
\Cl_V=\CW_{\Pi{V}}, \quad \CW_V=\Cl_{\Pi{V}}
\end{eqnarray}
by noting that $(\Pi{V})^*=\Pi{V^*}$.  Discussions above on $\Z_2$-gradings also apply here.

\begin{remark}
The dualities \eqref{eq:duality-1} and \eqref{eq:duality-2} require the introduction of morphisms of associative superalgebras which are more general than those usually allowed. Recall that usually one only allows even morphisms, that is, morphisms which are homogeneous of degree $0$, in a category of superalgebras of any given type.
\end{remark}

\subsection{Differential-integral forms in the Lie superalgebra context}\label{sect:forms}

Fix a Lie superalgebra $\fg=\fg_0\oplus\fg_1$, and denote by $\fg^*$ its dual space. Let $\Pi\fg$ (resp. $\Pi\fg^*$) be the image of $\fg$ (resp. $\fg^*$) under the parity reversal functor.  Note that  $\Pi\fg^*=(\Pi\fg)^*$.

We regard $\fg$ as the Lie superalgebra of the left invariant vector fields on a Lie supergroup $G$. Then forms on $G$ are polynomial functions and distributions on $\Pi\fg$. In particular, differential forms can be identified with elements of $S(\Pi\fg^*)$.

Following \cite[\S3.2]{W}, we can reformulate forms on $G$ using modules over the Weyl superalgebra $\CW_{\Pi\fg^*}$ over $\Pi\fg^*$. The differential-integral forms now correspond to elements of mixed Fock spaces $\CF_{\Pi\fg^*}(D)$ for $\CW_{\Pi\fg^*}$. In particular, the vector superspace of differential forms now corresponds to the standard Fock space $\CF_{\Pi\fg^*}(0)=S(\Pi\fg^*)$.

\subsection{Realisations of Lie superalgebras}

The dual vector superspace $\fg^*$ of the Lie superalgebra $\fg$ has a $\fg$-module structure with the $\fg$-action defined, for any $A\in\fg$ and $v^*\in \fg^*$,  by
\[
A.v^* (B) = (-1)^{[A][v^*]}v^*([A, B]), \quad \forall\, B\in\fg.
\]
There is an odd (i.e., degree $1$) vector superspace isomorphism $\pi: \fg^* \longrightarrow \Pi\fg^*$.
Now $\Pi\fg^*$ has the following $\fg$-module structure:
\begin{eqnarray}\label{eq:Pi-g}
A.\pi(v^*) = (-1)^{[A]} \pi(A.v^*), \quad \forall\, A\in\fg, \ v^*\in \fg^*.
\end{eqnarray}

Write $\CW=\CW_{\Pi\fg^*}$ for the Weyl superalgebra over $\Pi\fg^*$.

To describe $\CW$ more explicitly,   we assume that $\fg$ has super dimension $\sdim\fg=(N|M)$,  that is,
$\dim\fg_0=N$ and $\dim\fg_1=M$.
We choose a basis
 $\{E_i\mid 1\le i\le M\}$ for $\fg_1$, and a basis $\{E_{M+j}\mid 1\le j\le N\}$ for $\fg_0$. Here $M+1, M+2, \dots$ are merely symbols, thus $M$ and $N$ may be infinite.
Let $\Lambda=\{i\mid 1\le i\le M\}\cup \{M+j\mid 1\le j\le N\}$.
Now $\{E_a\mid a\in\Lambda\}$ is a homogeneous basis of $\fg$.
We denote by $ad$ the adjoint representation of $\fg$ relative to this basis,  that is,
\[
{[A, E_b]} = \sum_{c\in\Lambda} \ad(A)_{c b} E_c, \quad \forall\, b.
\]
Let $\{X_a\mid a\in\Lambda\}$ be the homogeneous basis of $\Pi\fg^*$ dual to the basis $\{E_a\mid a\in\Lambda\}$ of $\fg$.  Note that $X_i$ are odd for $1\le i\le N$ and $X_{N+j}$ are even for $1\le j\le M$. Then the Weyl superalgebra $\CW$ is generated by $X_a$, $\partial_a(:=\partial_{X_a})$ with $a\in\Lambda$ subject to the usual relations, namely,
\[
\begin{aligned}&
X_a X_b - (-1)^{[X_a][X_b]}X_b X_a =0, \quad  \partial_a \partial_b - (-1)^{[X_a][X_b]}\partial_b \partial_a =0, \\
&
\partial_a X_b - (-1)^{[X_a][X_b]}X_b \partial_a =\delta_{a b} \quad \text{ for all $a, b\in\Lambda$}.
\end{aligned}
\]
\begin{remark} Note that $[X_a]-[E_a]\equiv 1\,({\rm mod}\, 2)$ for all $a$.
\end{remark}

We write the $\Z_2$-graded commutator in $\CW$ as
\begin{eqnarray}
[A, B]=A B - (-1)^{[A][B]} B A, \quad A, B\in \CW.
\end{eqnarray}
Denote by $\Der_0(S(\Pi\fg^*))$ the sub superspace of $\CW$ consisting of $\Z_2$-graded derivations of
$S(\Pi\fg^*)$, that is,
\[
\Der_0(S(\Pi\fg^*))=\big\{\Xi\in\CW\ \big|\ \Xi(f g) = \Xi(f )g + (-1)^{[\Xi][f]}f \Xi(g), \ f, g\in S(\Pi\fg^*) \big\}.
\]
It is a Lie superalgebra with the super Lie bracket being the graded commutator in $\CW$.
We have the following result.
\begin{lemma}  \label{lem:realise}
There exists a homomorphism of Lie superalgebras defined by
\begin{align}
&\Gamma: \fg\longrightarrow \Der_0(S(\Pi\fg^*)), \quad
\Gamma(A)=- \sum_{b, c\in\Lambda} (-1)^{[A][E_c]} X_c \ad(A)_{b c} \partial_b.
\end{align}
The image $\Gamma(\fg)$ acts on  $\partial_{\fg^*} =\sum_{a\in\Lambda}\C \partial_a$ $($resp.  $\fg^*=\sum_{a\in\Lambda}\C X_a)$ by the adjoint representation $($resp. the dual of the adjoint representation$)$ of $\fg$. Explicitly, for all $A\in\fg$,
\begin{eqnarray}
&&{[\Gamma(A), X_b]} = -\sum_{c\in\Lambda} (-1)^{[A][E_c]}X_c \ad (A)_{b c},\label{eq:rep-X}\\
&&{[\Gamma(A), \partial_b]}= (-1)^{[A]}\sum_{c\in\Lambda} \ad (A)_{c b}\partial_c,
 \quad b\in\Lambda.  \label{eq:rep-d}
\end{eqnarray}
\end{lemma}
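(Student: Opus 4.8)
The plan is to prove the three assertions in turn: that each $\Gamma(A)$ is a graded derivation of $S(\Pi\fg^*)$ of $\Z$-degree $0$, that the commutator formulas \eqref{eq:rep-X}--\eqref{eq:rep-d} hold, and that $\Gamma$ respects brackets.

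First I would check that $\Gamma(A)\in\Der_0(S(\Pi\fg^*))$. Each summand $X_c\partial_b$ has $\Z$-degree $\deg X_c+\deg\partial_b=1-1=0$ in $\CW$, so $\Gamma(A)$ preserves the $\Z$-grading; and being of the vector-field form $\sum_c f_c\partial_b$ with coefficients $f_c\in S(\Pi\fg^*)$, it acts on the Fock module $\CF_{\Pi\fg^*}(0)=S(\Pi\fg^*)$ as a graded derivation, by the graded Leibniz rule inherited from the fact that each $\partial_b$ is a derivation. For the parity, recall $[X_a]\equiv[E_a]+1\pmod 2$ and that $\ad(A)_{bc}\ne0$ forces $[A]\equiv[E_b]+[E_c]$; then $[X_c]+[\partial_b]=[E_c]+[E_b]\equiv[A]$, so $\Gamma(A)$ is homogeneous of parity $[A]$, as a Lie superalgebra homomorphism must be.

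Next I would establish \eqref{eq:rep-X} and \eqref{eq:rep-d} by direct computation from the canonical (anti)commutation relations. The computation rests on the two elementary identities $[X_c\partial_b,\,X_{b'}]=\delta_{bb'}X_c$ and $[X_c\partial_b,\,\partial_{b'}]=-(-1)^{([X_c]+[X_b])[X_{b'}]}\delta_{cb'}\partial_b$, each obtained by moving $X_{b'}$ (resp. $\partial_{b'}$) past the pair using the relations and watching the two higher-order terms cancel. Substituting into $\Gamma(A)$ and summing yields \eqref{eq:rep-X} immediately; for \eqref{eq:rep-d} one simplifies the accumulated sign using $[X_a]\equiv[E_a]+1$ together with $[A]\equiv[E_b]+[E_{b'}]$, valid on the support of $\ad(A)_{bb'}$, which collapses the exponent to $[A]$. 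These two identities also show at once that $\Gamma(\fg)$ acts on $\sum_a\C\partial_a$ by $\ad$ and on $\sum_a\C X_a\cong\Pi\fg^*$ by the dual action \eqref{eq:Pi-g}. I expect this sign bookkeeping to be the main obstacle, since the parity shift $[X_a]=[E_a]+1$ must be combined consistently with the parity constraint imposed by the support of $\ad(A)$.

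Finally I would prove $\Gamma([A,B])=[\Gamma(A),\Gamma(B)]$. Both sides are graded derivations of $S(\Pi\fg^*)$ of $\Z$-degree $0$ --- the left one by the first step, the right one because the graded commutator of two graded derivations is again one --- so it suffices to check that they agree on the generating subspace spanned by the $X_b$, where a derivation's action is exactly the graded commutator with $X_b$. By \eqref{eq:rep-X} the assignment $\rho(A):=[\Gamma(A),\,\cdot\,]$ restricted to the span of the $X_b$ is precisely the $\fg$-action \eqref{eq:Pi-g}, namely the parity-shifted dual of the adjoint representation; since $\ad$ is a representation we have $\rho([A,B])=[\rho(A),\rho(B)]$. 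On the other hand the graded Jacobi identity in $\CW$ gives $[\,[\Gamma(A),\Gamma(B)],X_b\,]=[\rho(A),\rho(B)]\,X_b$, whence $[\Gamma([A,B]),X_b]=[\,[\Gamma(A),\Gamma(B)],X_b\,]$ for every $b$. Equality of the two derivations on generators then forces $\Gamma([A,B])=[\Gamma(A),\Gamma(B)]$, completing the proof. Alternatively one may verify this last identity head-on by expanding both sides with \eqref{eq:rep-X} and invoking $\ad([A,B])=[\ad A,\ad B]$, but the derivation argument avoids a second round of sign chasing.
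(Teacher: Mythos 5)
Your proposal is correct and follows essentially the same route as the paper: both first establish \eqref{eq:rep-X} and \eqref{eq:rep-d} by direct computation with the canonical (anti)commutation relations, and then deduce $\Gamma([A,B])=[\Gamma(A),\Gamma(B)]$ by comparing the two sides on the generators $X_b$, where the bracket action is the (parity-shifted) dual adjoint representation, so that the identity reduces to $\ad([A,B])=[\ad A,\ad B]$. The only cosmetic difference is that the paper expands $[[\Gamma(A),\Gamma(B)],X_b]$ entry by entry and invokes linearity of $[\Gamma(A),\Gamma(B)]$ in the $X_a$'s and $\partial_a$'s, whereas you package the same step as equality of two graded derivations agreeing on generators.
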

\begin{proof}  The following very easy computation proves \eqref{eq:rep-X}.
\[
\begin{aligned}
{[\Gamma(A), X_b]}  &=  - \sum_{a, c\in\Lambda} (-1)^{[A][E_c]} X_c \ad (A)_{a c} [\partial_a, X_b]\\
&=
- \sum_{c\in\Lambda} (-1)^{[A][E_c]} X_c \ad (A)_{b c}.
\end{aligned}
\]
  A similar computation proves \eqref{eq:rep-d}.

It follows from \eqref{eq:rep-X} that
\[
\begin{aligned}
{[\Gamma(B), [\Gamma(A), X_b]]}  &=  \sum_{a, c\in\Lambda} (-1)^{[A][E_c] + [B][E_a]} X_a \ad (B)_{c a}  \ad (A)_{b c}\\
&=  (-1)^{[A][B]}  \sum_{a, c\in\Lambda} (-1)^{([A]+ [B])[E_a]} X_a  (\ad (A) \ad (B))_{b a},
\end{aligned}
\]
and similarly,
\[
\begin{aligned}
{[\Gamma(A), [\Gamma(B), X_b]]}
&=  (-1)^{[A][B]}  \sum_{a, c\in\Lambda} (-1)^{([A]+ [B])[E_a]} X_a  (\ad (B) \ad (A))_{b a}.
\end{aligned}
\]
Now ${[[\Gamma(A), \Gamma(B)], X_b]} = {[\Gamma(A), [\Gamma(B), X_b]]}   -   (-1)^{[A][B]}  {[\Gamma(B), [\Gamma(A), X_b]]}$, hence
\[
\begin{aligned}
{[[\Gamma(A), \Gamma(B)], X_b]}
&= (-1)^{[A][B]}  \sum_{a, c\in\Lambda} (-1)^{([A]+ [B])[E_a]} X_a  \\
&\qquad \left((\ad (B) \ad (A))_{b a} -
(-1)^{[A][B]} (\ad (A) \ad (B))_{b a}\right)\\[6pt]
&=-\sum_{a, c\in\Lambda} (-1)^{([A]+ [B])[E_a]} X_a [\ad (A), \ad (B)]_{b  a} \\
&=-\sum_{a, c\in\Lambda} (-1)^{([A]+ [B])[E_a]} X_a \ad ([A,  B])_{b  a}.
\end{aligned}
\]
As $[\Gamma(A), \Gamma(B)]$ is linear in $X_a$'s and in $\partial_a$'s, this implies that
\[
\begin{aligned}
{[\Gamma(A), \Gamma(B)]}
&=-\sum_{a, c\in\Lambda} (-1)^{([A]+ [B])[E_a]} X_a \ad ([A,  B])_{b  a}\partial_b = \Gamma([A, B]),
\end{aligned}
\]
proving that $\Gamma$ is a Lie superalgebra homomorphism.
This completes the proof.
\end{proof}

We note in particular that for all $a, b\in\Lambda$,
\begin{eqnarray}
&&\Gamma(E_a)=\Gamma_a := -\sum_{b,c\in\Lambda} (-1)^{[E_a][E_c]}X_c \ad (E_a)_{b c} \partial_b,  \\
&&{[\Gamma_a, X_b]} = \sum_{c\in\Lambda} X_c \ad (E_c)_{b a}, \quad
[\Gamma_a, \partial_b] = (-1)^{[E_a]}\sum_{c\in\Lambda} \ad (E_a)_{c b}\partial_c, \label{eq:ad-reps}
\end{eqnarray}
and $\Gamma_a = \sum_{b,c\in\Lambda} X_c \ad (E_c)_{b a} \partial_b$.

The following result is an immediate consequence of Lemma \ref{lem:realise}.
\begin{corollary} \label{cor:action-W}
The Weyl superalgebra $\CW_{\Pi\fg^*}$ admits the following $\fg$-action
\[
\fg\times \CW_{\Pi\fg^*}\longrightarrow \CW_{\Pi\fg^*}, \quad (A, f)\mapsto A(f):=[\Gamma(A), f], \quad \forall\, A\in \fg, \ f\in \CW_{\Pi\fg^*}.
\]
That is, the above defines a $\fg$-action on $\CW_{\Pi\fg^*}$, which preserves the superalgebra structure of the latter in the sense that
\[
A(f g) = A(f) g + (-1)^{[A][f]}f A(g), \quad A\in \fg, \ f,  g\in \CW_{\Pi\fg^*}.
\]
\end{corollary}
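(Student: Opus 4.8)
The plan is to recognise the proposed $\fg$-action as nothing more than the adjoint action of the subalgebra $\Gamma(\fg)\subset\CW_{\Pi\fg^*}$ on $\CW_{\Pi\fg^*}$ through the graded commutator, so that the two assertions of the corollary reduce to two entirely formal properties of the graded commutator in an associative superalgebra, combined with the one substantive input already supplied by Lemma \ref{lem:realise}, namely that $\Gamma$ is a homomorphism of Lie superalgebras.

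First I would verify the superalgebra-compatibility (derivation) property. For any homogeneous $X\in\CW_{\Pi\fg^*}$, the map $[X,-]$ obeys the graded Leibniz rule
\[
[X, fg] = [X,f]\,g + (-1)^{[X][f]}\,f\,[X,g], \qquad f,g\in\CW_{\Pi\fg^*},
\]
which follows by expanding each graded commutator as $XY-(-1)^{[X][Y]}YX$ and cancelling the two mixed terms. Since $\Gamma$ is even, $[\Gamma(A)]=[A]$, and specialising $X=\Gamma(A)$ yields exactly $A(fg)=A(f)g+(-1)^{[A][f]}fA(g)$, the second displayed identity of the corollary.

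Next I would establish that $(A,f)\mapsto A(f)$ is a genuine $\fg$-module structure, that is, $[A,B](f)=A(B(f))-(-1)^{[A][B]}B(A(f))$. The relevant formal fact is the graded Jacobi identity for the commutator, in the form asserting that $[X,-]$ is a derivation of the bracket:
\[
[[X,Y],f] = [X,[Y,f]] - (-1)^{[X][Y]}[Y,[X,f]].
\]
Taking $X=\Gamma(A)$ and $Y=\Gamma(B)$, the left-hand side equals $[\Gamma([A,B]),f]=[A,B](f)$ by Lemma \ref{lem:realise}, while the right-hand side is precisely $A(B(f))-(-1)^{[A][B]}B(A(f))$.

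There is essentially no obstacle here, since the content was already present in Lemma \ref{lem:realise} and what remains is the elementary algebra of graded commutators. The only point requiring care is the consistent bookkeeping of Koszul signs, which is controlled throughout by the observation that $\Gamma$ is parity-preserving, so that the parity of $\Gamma(A)$ may be replaced by $[A]$.
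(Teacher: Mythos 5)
Your proof is correct and follows the route the paper intends: the paper states this corollary as an immediate consequence of Lemma \ref{lem:realise}, and your argument simply makes explicit the two formal graded-commutator identities (the Leibniz rule for $[X,-]$ and the graded Jacobi identity) that, together with $\Gamma$ being an even Lie superalgebra homomorphism, yield both assertions.
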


\subsection{BRST cohomology of Lie superalgebras}\label{sect:BRST}

The Lie superalgebra cohomology given in Definition \ref{def:standard} can be reformulated in terms of representations of Weyl superalgebras in the spirit of BRST theory (see, e.g., \cite{BBH}).

Retain notation in the last section, in particular, $\CW=\CW_{\Pi\fg^*}$.  For any sub superspace $D\subset \Pi\fg^*$, we write $\CF(D)$ for the mixed Fock superspace $\CF_{\Pi\fg^*}(D)$ for simplicity.  Recall that  $\CF(D)=\oplus_{p\in\Z}\CF(D)_p$ is $\Z$-graded.
If the even subspace of $D$ is a proper subspace of $V_0$,  then $\CF(D)_p$ is infinite dimensional for all $p\in\Z$.

We consider $\CW\otimes \U(\fg)$ as a $\Z$-graded superalgebra with $1\otimes \U(\fg)$ in degree $0$ and and thus $(\CW\otimes \U(\fg))_q=\CW_q\otimes \U(\fg)$ for all $q\in\Z$.  For any $\fg$-module $V$ and any mixed Fock superspace  $\CF(D)$ for $\CW$,  the tensor product $C_D(\fg, V):=\CF(D)\otimes V$ is a $\CW\otimes \U(\fg)$ module with a $\Z$-grading given by
\begin{eqnarray} \label{eq:D-chains}
C_D(\fg, V)=\bigoplus_{p\in\Z} C_D^p(\fg, V), \quad
C_D^p(\fg, V) = \CF(D)_p\otimes V.
\end{eqnarray}
It also inherits a $\Z_2$-grading from the $\Z_2$-gradings of $\CF(D)$ and $V$.

Define the following elements of $\CW\otimes \U(\fg)$
\begin{eqnarray}
&&\delta_1 = \sum_{a\in\Lambda} (-1)^{[E_a]} X_a \otimes E_a, \quad  \delta_2 = \frac{1}{2}\sum_{a\in\Lambda} (-1)^{[E_a]}X_a \Gamma_a\otimes 1, \\
&&\delta = \delta_1+\delta_2, \label{eq:delta-operator}
\end{eqnarray}
which are homogeneous of degree $1$ with  respect to the $\Z$-grading, and are odd with respect to the $\Z_2$-grading, of $\CW\otimes\U(\fg)$. Thus as linear operators on $C_D(\fg, V)$, they map $C_D^p(\fg, V)_\varepsilon$ to $C_D^{p+1}(\fg, V)_{\varepsilon+1}$ for any $p\in\Z$ and $\varepsilon\in\Z_2:=\Z/2\Z$.

\begin{remark}
One can easily show that the definitions of $\delta_1$, $\delta_2$,  and hence of $\delta$, are independent of the choice of basis for $\fg$.
\end{remark}

The following result easily follows from Lemma \ref{lem:realise}.
\begin{lemma} \label{lem:op-inv}
The operator $\delta$ defined by \eqref{eq:delta-operator}  is an odd $\fg$-invariant in $\CW\otimes\U(\fg)$, that is, it is of degree $1$ with respect to the $\Z_2$-grading and satisfies
\[
{[\Gamma(A)\otimes 1 + 1\otimes A, \delta]}=0, \quad  \forall\, A\in\fg.
\]
\end{lemma}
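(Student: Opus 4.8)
The statement has two parts. The parity claim is immediate: $X_a\otimes E_a$ and $X_a\Gamma_a\otimes 1$ are each odd, since $[X_a]=[E_a]+1$ and $[\Gamma_a]=[E_a]$, so $\delta$ is odd as already recorded before the lemma. The content to be proved is the invariance $[\Gamma(A)\otimes 1+1\otimes A,\delta]=0$. Conceptually, $\Gamma(A)\otimes 1+1\otimes A$ implements the diagonal $\fg$-action on $\CW\otimes\U(\fg)$, where $\CW$ carries the module-algebra action of Corollary~\ref{cor:action-W} and $\U(\fg)$ carries the adjoint action $u\mapsto[1\otimes A,1\otimes u]$. Under this action both the inclusion $\fg\hookrightarrow\U(\fg)$ and the map $\Gamma\colon\fg\to\CW$ are $\fg$-equivariant (the latter precisely because $\Gamma([A,B])=[\Gamma(A),\Gamma(B)]$ by Lemma~\ref{lem:realise}), and multiplication in $\CW$ is equivariant. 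The element $\sum_a(-1)^{[E_a]}X_a\otimes E_a$ is, up to normalisation, the invariant copairing of the dual bases $\{X_a\}\subset\Pi\fg^*$ and $\{E_a\}\subset\fg$; $\delta_1$ is its image under $\id\otimes(\fg\hookrightarrow\U(\fg))$ and $\delta_2$ is its image under $\id\otimes\Gamma$ followed by multiplication in $\CW$. This predicts that $\delta_1$ and $\delta_2$ are each \emph{separately} invariant, and I would use this as a guide while pinning down the signs by direct computation.

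I would first dispose of $\delta_2$. Since $\delta_2\in\CW\otimes 1$ and $1\otimes A$ graded-commutes with every $f\otimes 1$, one has $[1\otimes A,\delta_2]=0$, so it suffices to show $[\Gamma(A)\otimes 1,\delta_2]=0$, i.e.\ that $\frac{1}{2}\sum_a(-1)^{[E_a]}X_a\Gamma_a$ is $\Gamma(A)$-invariant. Expanding by the graded Leibniz rule gives
\[
[\Gamma(A),X_a\Gamma_a]=[\Gamma(A),X_a]\,\Gamma_a+(-1)^{[A]([E_a]+1)}X_a\,[\Gamma(A),\Gamma_a],
\]
into which I would insert $[\Gamma(A),X_a]=-\sum_c(-1)^{[A][E_c]}X_c\ad(A)_{ac}$ from \eqref{eq:rep-X} together with $[\Gamma(A),\Gamma_a]=\Gamma([A,E_a])=\sum_c\ad(A)_{ca}\Gamma_c$ from Lemma~\ref{lem:realise}. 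After relabelling $a\leftrightarrow c$ in the second term and using the parity constraint $[E_a]=[A]+[E_c]$ forced by $\ad(A)_{ac}\neq 0$, the two contributions acquire identical sign-exponents and opposite overall signs, hence cancel, giving $[\Gamma(A)\otimes 1,\delta_2]=0$.

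For $\delta_1=\sum_a(-1)^{[E_a]}X_a\otimes E_a$ I would compute the two commutators separately. Because $\Gamma(A)\otimes 1$ acts only on the first tensor factor, $[\Gamma(A)\otimes 1,X_a\otimes E_a]=[\Gamma(A),X_a]\otimes E_a$, which by \eqref{eq:rep-X} contributes $-\sum_{a,c}(-1)^{[E_a]+[A][E_c]}\ad(A)_{ac}\,X_c\otimes E_a$. For the second, a short Koszul-sign computation gives $[1\otimes A,X_a\otimes E_a]=(-1)^{[A]+[A][E_a]}X_a\otimes[A,E_a]$, the crucial step being the graded antisymmetry identity $[E_a,A]=-(-1)^{[A][E_a]}[A,E_a]$; expanding $[A,E_a]=\sum_c\ad(A)_{ca}E_c$ and relabelling $a\leftrightarrow c$ yields $+\sum_{a,c}(-1)^{[E_c]+[A]+[A][E_c]}\ad(A)_{ac}\,X_c\otimes E_a$. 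Under the same constraint $[E_a]=[A]+[E_c]$ the two expressions have equal exponents and opposite signs, so they cancel and $[\Gamma(A)\otimes 1+1\otimes A,\delta_1]=0$. Combining this with $[\Gamma(A)\otimes 1,\delta_2]=0$ and $[1\otimes A,\delta_2]=0$ gives the claim.

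The main obstacle is purely the sign bookkeeping. Three sources of signs must be tracked consistently: the Koszul signs coming from the superalgebra structure on $\CW\otimes\U(\fg)$, the normalisation factors $(-1)^{[E_a]}$ that make the copairing invariant, and — most delicately — the graded antisymmetry sign $[E_a,A]=-(-1)^{[A][E_a]}[A,E_a]$, which is exactly what aligns the $1\otimes A$ contribution to $\delta_1$ with the $\Gamma(A)\otimes 1$ contribution. In every cancellation the parity relation $[E_a]=[A]+[E_c]$ enforced by $\ad(A)_{ac}\neq 0$ has to be substituted to rewrite the exponents before the matching becomes visible; neglecting any one of these (as a first careless pass easily does) destroys the cancellation.
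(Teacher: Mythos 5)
Your proposal is correct and follows essentially the same route as the paper: the authors likewise observe that $\delta_1$ and $\delta_2$ are separately $\fg$-invariant because $\Gamma(\fg)$ and $\Pi\fg^*\subset\CW$ realise the adjoint and (parity-reversed) coadjoint modules, and then confirm this by the same direct sign computation using \eqref{eq:rep-X}, writing out the $\delta_1$ case explicitly and leaving the $\delta_2$ case as ``similar.'' Your version merely supplies the $\delta_2$ details the paper omits; the sign bookkeeping, including the identity $[1\otimes A, X_a\otimes E_a]=(-1)^{[A]([E_a]+1)}X_a\otimes[A,E_a]$ and the parity constraint $[E_a]=[A]+[E_c]$ forced by $\ad(A)_{ac}\neq 0$, matches the paper's computation exactly.
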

\begin{proof} It is clear that $\delta$ is odd.
By Lemma \ref{lem:realise}, $\Gamma(\fg)$ is isomorphic to the adjoint module for $\fg$, and
$\Pi\fg^*\subset\CW$ to the dual  adjoint module with reversed parity. Therefore, $\delta_1$ and $\delta_2$ are both $\fg$-invariant, and hence so is also $\delta$.

We can also verify this by direct computation. We have
\[
\begin{aligned}
&{[\Gamma(A)\otimes 1 + 1\otimes A, \delta_1]}\\
&\phantom{XXX}= \sum_a (-1)^{[E_a]} \left([\Gamma(A), X_a]\otimes E_a + (-1)^{[A]([E_a]+1)} X_a\otimes [A, E_a]\right)\\
&\phantom{XXX}=-\sum_{a, c} (-1)^{[E_a] + [A][E_c]}X_c \otimes \ad (A)_{a c}E_a\\
&\phantom{XXX}\quad +\sum_{a, c} (-1)^{[E_a] + [A]([E_a]+1)} X_a\otimes \ad (A)_{c a}E_c\\
&\phantom{XXX}=0.
\end{aligned}
\]
This shows the invariance of $\delta_1$.
We can also show that $[\Gamma(A), \sum_a (-1)^{[E_a]}X_a \Gamma_a]=0$ by a similar computation. This  implies the $\fg$-invariance of $\delta_2$.
\end{proof}

We have the following lemma, which is of crucial importance for the remainder of this paper.
\begin{lemma} \label{lem:key}
The operators $\delta_1$ and $\delta_2$ satisfy the following relations.
\[
\begin{aligned}
& [\delta_1, \delta_2]+[\delta_2, \delta_1] =-[\delta_1, \delta_1], \quad
&[\delta_2, \delta_2]=0.
\end{aligned}
\]
Hence it follows that the operator $\delta = \delta_1 + \delta_2$ satisfies
\[
(\delta)^2 =0.
\]
\end{lemma}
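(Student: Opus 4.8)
The plan is to exploit the fact that both $\delta_1$ and $\delta_2$ are odd with respect to the $\Z_2$-grading, so that every graded commutator appearing is in fact an anticommutator. For two odd elements the graded bracket is symmetric, $[\delta_1,\delta_2]=[\delta_2,\delta_1]=\delta_1\delta_2+\delta_2\delta_1$, while $[\delta_i,\delta_i]=2\delta_i^2$. Thus the first asserted relation is equivalent to $\delta_1\delta_2+\delta_2\delta_1=-\delta_1^2$, the second to $\delta_2^2=0$, and granting both one obtains $\delta^2=\delta_1^2+(\delta_1\delta_2+\delta_2\delta_1)+\delta_2^2=0$. So I would first reduce the statement to these two identities and then establish each in turn.

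First I would compute $\delta_1^2$. Multiplying out in $\CW\ot\U(\fg)$ and keeping track of the Koszul sign $(-1)^{[E_a][X_b]}$ coming from the tensor-product multiplication, one finds $\delta_1^2=\sum_{a,b}(-1)^{[E_b](1+[E_a])}X_aX_b\ot E_aE_b$. Symmetrising the index pair $(a,b)$ and using the supercommutativity $X_aX_b=(-1)^{[X_a][X_b]}X_bX_a$ together with $[X_a]=[E_a]+1$, the graded-symmetric part of the $\U(\fg)$-factor cancels and only the bracket survives, giving
\begin{equation*}
\delta_1^2=\tfrac12\sum_{a,b}(-1)^{[E_b](1+[E_a])}X_aX_b\ot[E_a,E_b],\qquad [E_a,E_b]=\sum_c\ad(E_a)_{cb}E_c.
\end{equation*}
The essential point here is that the $X$'s supercommute, so $\delta_1^2$ carries only single generators $E_c$; this is precisely what is needed for it to be matched by $\{\delta_1,\delta_2\}$, in which $\delta_2$ contributes no $\U(\fg)$-factor.

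Next I would compute the anticommutator $\delta_1\delta_2+\delta_2\delta_1$. Both products contain the string $X_aX_b\Gamma_b$ in one order or the other, and I would push every $\Gamma_b$ to the right past $X_a$ using the commutation relation $[\Gamma_b,X_a]=\sum_c X_c\,\ad(E_c)_{ab}$ from \eqref{eq:ad-reps}. This splits each product into a straightened part still carrying a factor $\Gamma_b$ and a contraction part in which $\Gamma_b$ has been absorbed into a structure constant. After reordering $X_bX_a$ to $X_aX_b$, the two straightened parts appear with opposite overall sign and cancel; this cancellation is forced by the supercommutativity of the $X$'s and is the reason no $\Gamma$ survives. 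The remaining contraction terms I would reindex and compare with the formula for $\delta_1^2$ above: matching the structure constants shows that they reproduce exactly $-\delta_1^2$, which is the first relation.

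Finally, for $\delta_2^2=0$ I would expand $\delta_2^2=\tfrac14\sum_{a,b}(-1)^{[E_a]+[E_b]}X_a\Gamma_aX_b\Gamma_b\ot1$ and again move the inner $\Gamma_a$ to the right. The terms in which both $\Gamma$'s survive are proportional to $X_aX_b\,\Gamma_a\Gamma_b$; symmetrising in $(a,b)$ and invoking supercommutativity of the $X$'s replaces $\Gamma_a\Gamma_b$ by half the graded bracket $[\Gamma_a,\Gamma_b]$, which by Lemma \ref{lem:realise} equals $\Gamma([E_a,E_b])$. This double-$\Gamma$ contribution then cancels against the single-$\Gamma$ contraction terms precisely by the Jacobi identity for $\fg$, encoded in the antisymmetry and cocycle relations of the structure constants $\ad(E_a)_{cb}$. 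I expect the main obstacle to be purely the sign bookkeeping: the parity shift $[X_a]=[E_a]+1$, the Koszul signs of multiplication in $\CW\ot\U(\fg)$, and the signs in \eqref{eq:ad-reps} must all be tracked consistently. The two genuinely load-bearing inputs are the supercommutativity of the $X$'s, which kills the symmetric parts, and the homomorphism property of $\Gamma$ from Lemma \ref{lem:realise}, which delivers the Jacobi cancellation.
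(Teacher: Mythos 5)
Your proposal is correct and follows essentially the same route as the paper's proof: both reduce the claim to $\delta_1\delta_2+\delta_2\delta_1=-\delta_1^2$ and $\delta_2^2=0$, prove the first by expanding $\delta_1^2$ via supercommutativity of the $X_a$'s and the parity shift $[X_a]=[E_a]+1$ and matching the result against the contraction terms produced by \eqref{eq:ad-reps}, and prove the second by reducing everything to a single-$\Gamma$ expression and invoking the Jacobi identity through the homomorphism property of $\Gamma$ from Lemma \ref{lem:realise}. Your expression for $\delta_1^2$ agrees with the paper's \eqref{eq:d1-d1} after the sign conversion $[E_b,E_a]=-(-1)^{[E_a][E_b]}[E_a,E_b]$.
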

\begin{proof} The proof is by direct computation, which is relatively straightforward, but we nevertheless present details of the proof because of the importance of the lemma.
Note that we only need to prove the first part of the lemma, as the second part follows from the first.

Let us start by showing that
\begin{eqnarray}\label{eq:d1-d1}
{[\delta_1, \delta_1]}=- \sum_{a, b\in\Lambda} (-1)^{[E_b]}X_a X_b\otimes [E_b, E_a].
\end{eqnarray}
Clearly $[\delta_1, \delta_1]$ is equal to
\[
\begin{aligned}
\sum_{a, b\in\Lambda} (-1)^{[E_a]+[E_b]}\left(X_a X_b\otimes E_a E_b (-1)^{[X_b][E_a]} + X_b X_a\otimes E_b E_a (-1)^{[X_a][E_b]}\right).
\end{aligned}
\]
Since $X_a X_b = (-1)^{[X_a][X_b]} X_b X_a$ and $[X_c]-[E_c]\equiv 1 \,({\rm mod}\, 2)$ for all $c$, we have
\[
{[\delta_1, \delta_1]}= \sum_{a, b\in\Lambda} (-1)^{[X_b]}X_a X_b\otimes[E_b, E_a] =-\sum_{a, b\in\Lambda} (-1)^{[E_b]}X_a X_b\otimes[E_b, E_a].
\]
This proves equation \eqref{eq:d1-d1}.

We note that $[\delta_1, \delta_2]=[\delta_2, \delta_1]$.  Now
\[
2{[\delta_1, \delta_2]}= \sum_{a, b}  (-1)^{[E_a]+[E_b]} X_a[\Gamma_a, X_b]\otimes E_b.
\]
Using the first relation in \eqref{eq:ad-reps},  we obtain
\[
\begin{aligned}
2{[\delta_2, \delta_1]}&= \sum_{a, b, c} (-1)^{[E_a]+[E_b]} X_a X_c\otimes  \ad (E_c)_{b a} E_b\\
				&= \sum_{a, b, c} (-1)^{[E_c]} X_a X_c\otimes  \ad (E_c)_{b a} E_b\\
				&= \sum_{a, c} (-1)^{[E_c]} X_a X_c\otimes  [E_c, E_a].
\end{aligned}
\]
Hence $[\delta_1, \delta_2] + [\delta_2, \delta_1]= -[\delta_1, \delta_1]$ by  \eqref{eq:d1-d1}, proving the first relation in the lemma.

Now consider $[\delta_2, \delta_2]$. The property $[A, BC]=[A, B] C + (-1)^{[A][B]}B[A, C]$ of the super commutator leads to
\[
\begin{aligned}
{[\delta_2, \delta_2]}&= \frac{1}{2}\sum_{a, b} (-1)^{[E_a]+[E_b]}X_a[\Gamma_a, X_b]\Gamma_b\\
&\phantom{=}+ \frac{1}{4} \sum_{a, b} (-1)^{([E_a]+1)[E_b]}X_aX_b[\Gamma_a, \Gamma_b].
\end{aligned}
\]
Using Lemma \ref{lem:realise}, we obtain
\[
\begin{aligned}
{[\delta_2, \delta_2]}&= - \frac{1}{4} \sum_{a, b} (-1)^{([E_a]+1)[E_b]}X_aX_b[\Gamma_a, \Gamma_b]\\
&=- \frac{1}{4} \sum_{a, b} (-1)^{([E_a]+1)[E_b]}X_a X_b \ad (E_a)_{c b} \Gamma_c.
\end{aligned}
\]
As this operator is tri-linear in the elements $X_f$ and linear in the elements $\partial_f$, it vanishes
if and only if  $[\delta_2, \delta_2](X_f)=0$ for all $f\in\Lambda$.  Now by Lemma \ref{lem:realise},
\[
\begin{aligned}
{[\delta_2, \delta_2]}(X_f)
&=\frac{1}{4} \sum_{a, b, c, g} (-1)^{([E_a]+1)[E_b] +[E_c][E_g]}X_a X_b X_g \ad (E_a)_{c b}  \ad (E_c)_{f g}.
\end{aligned}
\]
Introduce the element $\Theta:= \sum_{f\in\Lambda}{[\delta_2, \delta_2]}(X_f)\otimes E_f$ in $\CW\otimes\fg$. Then
$[\delta_2, \delta_2]=0$ if and only if $\Theta=0$.  We have
\[
\begin{aligned}
\Theta
&=\frac{1}{4} \sum_{a, b, g} (-1)^{([E_a]+1)[E_b] +([E_a]+[E_b])[E_g]}X_a X_b X_g \otimes [[E_a, E_b], E_g]\\
&=\frac{1}{4} \sum_{a, b, c} (-1)^{[E_b]} X_a X_b X_c \otimes [E_c, [E_b, E_a]].
\end{aligned}
\]
The right hand side can clearly be re-written as
\[
\begin{aligned}
\frac{1}{12} \sum_{a, b, c}\Big(
&(-1)^{[E_b]} X_a X_b X_c \otimes [E_c, [E_b, E_a]]\\
&+ (-1)^{[E_c]}  X_b X_c X_a\otimes [E_a, [E_c, E_b]]\\[6pt]
&+ (-1)^{[E_a]} X_c X_a X_b \otimes [E_b, [E_a, E_c]]\Big).
\end{aligned}
\]
By the super commutativity property of $X_f$,  this can be further re-written as
\[
\begin{aligned}
\frac{1}{12} \sum_{a, b, c}& (-1)^{[E_b]} X_a X_b X_c \otimes \Big ([E_c, [E_b, E_a]]\\
&+ (-1)^{[E_a]([E_b]+[E_c])} [E_a, [E_c, E_b]]\\[6pt]
 &+(-1)^{[E_c]([E_a]+[E_b])} [E_b, [E_a, E_c]]\Big),
\end{aligned}
\]
which vanishes identically by the Jacobian identity of $\fg$.

This completes the proof.
\end{proof}

 The following result is an obvious corollary of Lemma \ref{lem:key}.
\begin{theorem} \label{thm:main}
Given a sub superspace $D\subset\Pi\fg^*$ and a $\fg$-module $V$,  set
$C_D^p(\fg, V)$ $=$ $\CF(D)_p\otimes V$ for all $p\in\Z$ $($cf. \eqref{eq:D-chains}$)$. Let $\delta$ be the operator defined by \eqref{eq:delta-operator}. Then there is the following
 differential complex
\begin{eqnarray*}
\cdots\stackrel{\delta}\longrightarrow C_D^{-2}(\fg, V)\stackrel{\delta}\longrightarrow C_D^{-1}(\fg, V)\stackrel{\delta}\longrightarrow  C_D^0(\fg, V)\stackrel{\delta}\longrightarrow  C_D^1(\fg, V)\stackrel{\delta}\longrightarrow\cdots,
\end{eqnarray*}
which will be denote by $(C_D(\fg, V), \delta)$, and its homology groups by $H^p_D(\fg, V)$.
\end{theorem}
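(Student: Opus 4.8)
The plan is to observe that the statement is purely formal once Lemma \ref{lem:key} is in hand: to exhibit a cochain complex one needs only that $\delta$ shifts the $\Z$-grading by one and squares to zero. First I would note that $C_D(\fg, V)=\CF(D)\otimes V$ is by construction a module over the $\Z$-graded superalgebra $\CW\otimes\U(\fg)$, with $V$ placed in $\Z$-degree $0$, so that any homogeneous element of $\CW\otimes\U(\fg)$ acts on $C_D(\fg, V)$ as a linear operator of the corresponding degree. In particular the element $\delta=\delta_1+\delta_2$ of \eqref{eq:delta-operator} acts.

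Second I would verify the degree-shift property, which is immediate from the gradings. In $\delta_1=\sum_a(-1)^{[E_a]}X_a\otimes E_a$ the $\CW$-factor $X_a$ has $\Z$-degree $+1$, while in $\delta_2=\tfrac12\sum_a(-1)^{[E_a]}X_a\Gamma_a\otimes 1$ the factor $\Gamma_a$ is linear in the $X$'s and in the $\partial$'s and hence has $\Z$-degree $0$, so that $X_a\Gamma_a$ again has degree $+1$; the $\U(\fg)$-factors all sit in degree $0$. Thus $\delta$ is homogeneous of $\Z$-degree $+1$ and carries $C_D^p(\fg, V)=\CF(D)_p\otimes V$ into $C_D^{p+1}(\fg, V)$ for every $p\in\Z$. (As already recorded just before the theorem, $\delta$ is moreover odd for the $\Z_2$-grading.)

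Third, Lemma \ref{lem:key} gives $\delta^2=0$ as an identity in $\CW\otimes\U(\fg)$, hence as an operator identity on $C_D(\fg, V)$. Combining the two properties, the sequence displayed in the statement is a cochain complex, and the groups
\[
H^p_D(\fg, V)=\frac{\ker\big(\delta\colon C_D^p(\fg, V)\to C_D^{p+1}(\fg, V)\big)}{\im\big(\delta\colon C_D^{p-1}(\fg, V)\to C_D^p(\fg, V)\big)}
\]
are well-defined for all $p\in\Z$. I do not expect any genuine obstacle here, since the entire difficulty has been absorbed into the commutator computation of Lemma \ref{lem:key}; the remaining verification is bookkeeping with the $\Z$-grading. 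The only point worth flagging is that, unlike the one-sided complex of Definition \ref{def:standard}, the complex $(C_D(\fg, V),\delta)$ is in general two-sided, extending as $p\to-\infty$, precisely because $\CF(D)_p$ is nonzero for all $p$ when the even part of $D$ is a proper subspace; this however does not affect the definition of the cohomology.
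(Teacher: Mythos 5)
Your argument is correct and is essentially the paper's own proof: the paper simply invokes Lemma \ref{lem:key} for $\delta^2=0$, having already recorded just before the theorem that $\delta$ is homogeneous of $\Z$-degree $+1$ on $C_D(\fg,V)$. Your additional bookkeeping on the gradings and the remark about the complex being two-sided are consistent with the paper and add nothing that would change the verdict.
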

\begin{proof}
It follows Lemma \ref{lem:key} that $(C_D(\fg, V), \delta)$ is indeed a differential complex.
\end{proof}

\begin{remark}
If $\fg$ is an ordinary Lie algebra, then $\Pi\fg^*$ is purely odd, and $\CW_{\Pi\fg^*}$ is a Clifford algebra. Therefore  $\CF_{\Pi\fg^*}(0)=\CF_{\Pi\fg^*}(\Pi\fg^*)$, and hence $H^{-p}_{\Pi\fg^*}(\fg, V)=H^{\dim\fg-p}_0(\fg, V)$ for all $p=0, 1, \dots, \dim\fg$.
\end{remark}

\subsection{Mixed cohomology of Lie superalgebras}
\subsubsection{BRST formulation of the standard cohomology}\label{sect:BRST-old}

We now show that  the standard cohomology of Lie superalgebras given in Definition \ref{def:standard} is the special case of the cohomology in Theorem \ref{thm:main} with $D=0$.
We write $\CF=\CF(0)$, then $\CF=S(\Pi\fg^*)\cong {\bigwedge}\fg^*$.  By Section \ref{sect:symm-skew-Pi},
the vector superspace isomorphisms $\CF_p\cong {\bigwedge}^p\fg^*$ are even (resp., odd) for  even (resp., odd) $p$. Thus we have the isomorphisms  of vector superspaces
\[
\iota_p: C^p(\fg, V) \stackrel{\cong}\longrightarrow \CF_p\otimes V, \quad \forall\, p,
\]
which are even (resp., odd) for even  (resp., odd) $p$.
\begin{theorem} \label{lem:reform} \label{thm:delta}  \label{thm:reform}
\begin{enumerate}
\item Define the following linear maps
\begin{eqnarray}
\delta^p:= \iota_{p+1}d^p\iota_p^{-1}:  \CF_p\otimes V  \longrightarrow \CF_{p+1}\otimes V \quad \text{for all $p$}.
\end{eqnarray}
Then there is a  differential complex
\[
(\CF\otimes V, \delta):\qquad \CF_0\otimes V \stackrel{\delta^0 }\longrightarrow \CF_1\otimes V \stackrel{\delta^1 }\longrightarrow  \CF_2\otimes V \stackrel{\delta^2 }\longrightarrow  \cdots,
\]
which is isomorphic to the differential complex $(C(\fg, V), d)$ in Definition $\ref{def:standard}$.

\item
The restriction of $\delta_i$ $(i=1, 2)$ to $\CF_p\otimes V$ coincides with
$\iota_p d_i \iota_p^{-1}$ for all $p$, and hence
\begin{eqnarray}
\delta^p = \delta|_{\CF_p\otimes V}, \quad \forall\, p.
\end{eqnarray}
\end{enumerate}
\end{theorem}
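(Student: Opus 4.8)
The plan is to prove both parts simultaneously by comparing the two differential operators through the identifications $\iota_p$. The key conceptual point is that the BRST operator $\delta = \delta_1 + \delta_2$ was built so that its two summands $\delta_1$ and $\delta_2$ mirror the two summands $d^p_1$ and $d^p_2$ of the Chevalley–Eilenberg differential from \eqref{eq:d-1} and \eqref{eq:d-2}. So I would focus on establishing part (2), which identifies $\delta_i|_{\CF_p\otimes V}$ with $\iota_p d_i \iota_p^{-1}$ term by term; part (1) then follows immediately, since $\delta^p := \iota_{p+1} d^p \iota_p^{-1} = \iota_{p+1}(d^p_1 + d^p_2)\iota_p^{-1} = \delta_1|_{\CF_p\otimes V} + \delta_2|_{\CF_p\otimes V} = \delta|_{\CF_p\otimes V}$, and $(C(\fg,V),d)\cong(\CF\otimes V,\delta)$ as complexes by construction, with the $\iota_p$ providing the chain isomorphism.

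First I would make the identification $\iota_p\colon C^p(\fg,V) = \bigwedge^p(\fg^*)\otimes V \xrightarrow{\cong} \CF_p\otimes V = S^p(\Pi\fg^*)\otimes V$ completely explicit on basis elements. Using the dual basis $\{X_a\}$ of $\Pi\fg^*$ and the isomorphism $\bigwedge(\fg^*)\cong S(\Pi\fg^*)$ of Section \ref{sect:symm-skew-Pi}, a decomposable $p$-cochain $X_{a_1}^*\wedge\cdots\wedge X_{a_p}^*\otimes v$ maps to the monomial $X_{a_1}\cdots X_{a_p}\otimes v$ (with the parity-shift signs tracked by $\pi$). The operator $\delta_1 = \sum_a (-1)^{[E_a]} X_a\otimes E_a$ acts by left-multiplication by $X_a$ in the first tensor factor and by the module action of $E_a$ in $V$; under $\iota$ this should reproduce exactly the ``insert $A_r$ and apply it to the coefficient'' term \eqref{eq:d-1}. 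The operator $\delta_2 = \tfrac12\sum_a (-1)^{[E_a]} X_a\Gamma_a\otimes 1$, where $\Gamma_a$ acts on $S(\Pi\fg^*)$ as the derivation of \eqref{eq:ad-reps} encoding the coadjoint action, should reproduce the structure-constant term \eqref{eq:d-2} involving the brackets $[A_r,A_s]$.

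The main obstacle will be the careful bookkeeping of signs. There are three independent sources of sign: the Koszul signs in the symmetry $\tau$ defining $S$ and $\bigwedge$; the parity shift $[X_a]-[E_a]\equiv 1 \pmod 2$ coming from the functor $\Pi$; and the explicit signs $(-1)^{r+\Sigma_{r-1,r}}$ and $(-1)^{s+\Sigma_{r,s}-\Sigma_{s-1,s}}$ in \eqref{eq:d-1}–\eqref{eq:d-2} with $\Sigma_{k,\ell}=[A_\ell]\big([g]+\sum_{i=0}^k[A_i]\big)$. I would verify the match for $\delta_1$ first: multiplying a monomial $X_{a_1}\cdots X_{a_p}$ on the left by $X_a$ and commuting it into position, together with the sign $(-1)^{[E_a]}$, must reproduce the alternating sign $(-1)^r$ and the coefficient sign $\Sigma_{r-1,r}$ of \eqref{eq:d-1}; here the delta-function factor in \eqref{eq:derivative-delta} is irrelevant since we are in $\CF(0)$. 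For $\delta_2$ the derivation $\Gamma_a$ differentiates the monomial (via the $\partial_b$ appearing in $\Gamma_a$, which contract against the $X_{a_s}$), then $X_a$ reinserts the index corresponding to the bracket; the factor $\tfrac12$ is absorbed by the double-counting over the unordered pair $\{r,s\}$, exactly matching the constraint $r<s$ in \eqref{eq:d-2}. Once the sign ledgers on both sides are shown to agree on monomials, linearity and the derivation property of $\Gamma_a$ (Corollary \ref{cor:action-W}) extend the equality to all of $\CF_p\otimes V$, completing part (2) and hence the theorem.
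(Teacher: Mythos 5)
Your proposal is correct and follows essentially the same route as the paper: part (1) is immediate from the definition of $\delta^p$ as a conjugate of $d^p$, and the substance lies in matching $\delta_1$ with $d_1$ and $\delta_2$ with $d_2$ under the isomorphisms $\iota_p$, with the factor $\tfrac12$ in $\delta_2$ absorbed by the double count over unordered pairs and the signs reconciled through the parity shift $[X_a]-[E_a]\equiv 1\ (\mathrm{mod}\ 2)$. In fact your plan is more explicit than the paper's own proof, which merely invokes the known purely even (BRST-for-Lie-algebras) case and asserts that the sign check for the odd directions goes through.
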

\begin{proof} Part (1) is clear, thus we only need to prove part (2).
By inspecting the structure of the operators $\delta_1$ and $\delta_2$, one can see
that the theorem is valid if $\fg$ is an ordinary Lie algebra, that is, a purely even Lie superalgebra.  This is a well known fact from the study of BRST cohomology of Lie algebras in the physics literature.  To generalise this to Lie superalgebras with non-trivial odd subspaces, we only need to check that the operators lead to the correct signs in \eqref{eq:d-1} and \eqref{eq:d-2}, and it is indeed true.
\end{proof}

We may restate the above theorem as follows, obtaining
a BRST reformulation of the standard cohomology of Lie superalgebras  \cite{FL, SchZ} given in Definition \ref{def:standard}.
\begin{corollary} \label{cor:reformulation}
When $D=0$,  the differential complex $(C_0(\fg, V), \delta)$ of Theorem \ref{thm:main}
is isomorphic to the complex of $\fg$-modules in Definition $\ref{def:standard}$.
\end{corollary}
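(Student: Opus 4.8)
The plan is to obtain this corollary as an essentially immediate consequence of Theorem \ref{thm:reform}, the only genuinely new point being to promote the isomorphism of differential complexes established there to an isomorphism of complexes of $\fg$-modules. First I would specialise Theorem \ref{thm:main} to the mixing set $D=0$: since $\CF(0)=S(\Pi\fg^*)$ with $\CF(0)_p=S^p(\Pi\fg^*)$, the cochain spaces become $C_0^p(\fg,V)=S^p(\Pi\fg^*)\otimes V$, and the differential is the restriction of $\delta=\delta_1+\delta_2$ to each graded piece. By Theorem \ref{thm:reform}(2) this restriction equals $\delta^p=\iota_{p+1}d^p\iota_p^{-1}$, so the family $\{\iota_p\}$ intertwines the standard differential $d^p$ on $C^p(\fg,V)=\bigwedge^p(\fg^*)\otimes V$ with $\delta$ on $C_0^p(\fg,V)$. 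This is precisely the content of Theorem \ref{thm:reform}(1), which already yields an isomorphism $(C(\fg,V),d)\cong(C_0(\fg,V),\delta)$ of differential complexes.

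It then remains to check that each $\iota_p$ is an isomorphism of $\fg$-modules, not merely of vector superspaces. I would argue this by tracing the $\fg$-actions on the two sides back to their definitions. The $\fg$-action on the BRST complex is the diagonal action on $\CF(0)\otimes V$, where $\fg$ acts on $\CF(0)=S(\Pi\fg^*)$ through Corollary \ref{cor:action-W}, hence is induced from the action on $\Pi\fg^*$ given by \eqref{eq:Pi-g}. On the other side, $\fg$ acts on $\bigwedge^p(\fg^*)$ diagonally from its action on $\fg^*$. The defining formula \eqref{eq:Pi-g} was chosen exactly so that the canonical odd isomorphism $\pi\colon\fg^*\to\Pi\fg^*$ is $\fg$-equivariant up to the sign $(-1)^{[A]}$; combined with the superalgebra identification $\bigwedge(\fg^*)\cong S(\Pi\fg^*)$ of Section \ref{sect:symm-skew-Pi}, this makes each $\iota_p$ a $\fg$-module isomorphism (even for even $p$, odd for odd $p$). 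The $\fg$-equivariance of the two differentials is then automatic: $d^p$ is a $\fg$-homomorphism by construction in Definition \ref{def:standard}, while $\delta$ is $\fg$-invariant in $\CW\otimes\U(\fg)$ by Lemma \ref{lem:op-inv}, so it commutes with the diagonal $\fg$-action on $C_0(\fg,V)$.

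I expect the only care needed to be in the bookkeeping of parities and signs coming from the parity-reversal functor: the isomorphisms $\iota_p$ alternate between even and odd as $p$ varies, and one must confirm that the $(-1)^{[A]}$ twist in \eqref{eq:Pi-g} is compensated exactly on passing to the symmetric powers, so that $\fg$-equivariance holds on every $\bigwedge^p(\fg^*)\cong S^p(\Pi\fg^*)$ simultaneously and compatibly with the differentials. Once these signs are verified, the collection $\iota=\bigoplus_p\iota_p$ is a chain isomorphism of complexes of $\fg$-modules, which is the assertion of the corollary.
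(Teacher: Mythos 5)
Your proposal is correct and follows essentially the same route as the paper: the corollary is obtained as an immediate restatement of Theorem \ref{thm:reform}, with the $\fg$-module structure supplied by Lemma \ref{lem:op-inv} (and Lemma \ref{lem:mod-inv}), exactly as the paper indicates in the remark following the corollary. Your extra care in spelling out the $(-1)^{[A]}$ sign from \eqref{eq:Pi-g} and the even/odd alternation of the $\iota_p$ is consistent with the paper's own note that the degree-$p$ cochain isomorphism is even or odd according to the parity of $p$.
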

Note that the superspace isomorphism of cochains of degree $p\in\Z$ is even (resp., odd) if $p$ is even (resp., odd) with respect to the $\Z_2$-grading.

\begin{remark}
Lemma \ref{lem:op-inv} and also Lemma \ref{lem:mod-inv}  below describe explicitly the $\fg$-module structure of the differential complex $(C_0(\fg, V), \delta)$.
\end{remark}

\subsubsection{Mixed cohomology of Lie superalgebras}

Recall the $\fg$-action on the Weyl superalgebra $\CW$ given in Lemma \ref{cor:action-W}.
If $D\subset \Pi\fg^*\subset\CW$ is a $\fg$-submodule, then so is also
$D^\perp =\{\partial\in (\Pi\fg^*)^*\mid \partial(w)=0, \ \forall\, w\in D\}$.
It follows that the left ideal
\begin{eqnarray}\label{eqn:ideal-1}
J(D):= \sum\limits_{\partial\in D^\perp} \CW \partial+ \sum\limits_{x\in D} \CW x
\end{eqnarray}
is also a  $\fg$-submodule of $\CW$, and hence $\CF(D)=\CW/J(D)$ is a quotient module. The following result is immediate.

\begin{lemma}\label{lem:mod-inv}
Assume that the mixing set $D$ of a $\CW$-module $\CF(D)$ is a $\fg$-submodule of $\Pi\fg^*$. Then for any $\fg$-module $V$, the superspace of mixed cochains $C_D(\fg, V)=\CF(D)\otimes V$ has the structure of a $\Z$-graded $\fg$-module.
\end{lemma}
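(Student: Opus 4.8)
The plan is to build the $\fg$-module structure on $C_D(\fg, V)$ by combining three facts already in place: the $\fg$-action on $\CW$, the $\fg$-stability of the ideal $J(D)$, and the diagonal action on a tensor product of $\fg$-modules. First I would recall from Corollary \ref{cor:action-W} that $\CW=\CW_{\Pi\fg^*}$ carries the $\fg$-action $A(f)=[\Gamma(A), f]$, which respects the superalgebra structure. The point deserving attention is that this action also preserves the $\Z$-grading of $\CW$: by Lemma \ref{lem:realise} the operator $\Gamma(A)=-\sum_{b,c\in\Lambda}(-1)^{[A][E_c]}X_c\,\ad(A)_{bc}\,\partial_b$ is homogeneous of $\Z$-degree $0$, since each summand contains exactly one $X_c$ (degree $+1$) and one $\partial_b$ (degree $-1$); hence $[\Gamma(A),\,\cdot\,]$ sends $\CW_q$ into $\CW_q$ for every $q\in\Z$.

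Next I would check that $J(D)$ is a $\Z$-graded $\fg$-submodule of $\CW$. Its $\fg$-stability is exactly the content of the discussion preceding the lemma, using that $D$, and therefore $D^\perp$, are $\fg$-submodules. It is $\Z$-graded because each generator is homogeneous: every $\partial\in D^\perp$ has degree $-1$ and every $x\in D$ has degree $+1$, so that $\CW\partial=\bigoplus_{q}\CW_q\partial$ and $\CW x=\bigoplus_{q}\CW_q x$ are graded left ideals, and so is their sum. Consequently the quotient $\CF(D)=\CW/J(D)$ is a $\Z$-graded $\fg$-module whose graded pieces $\CF(D)_p$ are induced from those of $\CW$.

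Finally, I would form the tensor product $\CF(D)\otimes V$ and equip it with the diagonal $\fg$-action of Section \ref{sect:cohomology}, namely $A\cdot(f\otimes v)=A(f)\otimes v+(-1)^{[A][f]}f\otimes Av$; this is a $\fg$-module because both factors are. Since the action on $\CF(D)$ preserves $\Z$-degree while the action on $V$ does not affect the degree label, the diagonal action maps each $C_D^p(\fg, V)=\CF(D)_p\otimes V$ into itself, so the $\Z$-grading of \eqref{eq:D-chains} (together with the inherited $\Z_2$-grading) is respected. I expect no genuine obstacle: the only subtle step is confirming the $\Z$-degree $0$ homogeneity of $\Gamma(A)$, which is what upgrades the bare $\fg$-module structure to a compatibility with the $\Z$-grading asserted in the statement.
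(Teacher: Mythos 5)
Your proof is correct and takes essentially the same route as the paper, which simply declares the result immediate after observing that $D^\perp$ and hence the left ideal $J(D)$ are $\fg$-submodules of $\CW$, so that $\CF(D)=\CW/J(D)$ is a quotient $\fg$-module. You have merely made explicit the two points the paper leaves tacit, namely that $\Gamma(A)$ is homogeneous of $\Z$-degree $0$ so the action preserves the grading, and that the diagonal action on $\CF(D)\otimes V$ then respects the grading of \eqref{eq:D-chains}.
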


The following result is an obvious corollary of Lemma \ref{lem:mod-inv} and Lermma \ref{lem:op-inv}.
\begin{theorem}
Assume that the mixing set $D$ of a $\CW$-module $\CF(D)$ is a $\fg$-submodule of $\Pi\fg^*$. Then $\delta: C_D(\fg, V)\longrightarrow C_D(\fg, V)$ is an odd $\fg$-module homomorphism which has degree $1$ with respect to the $\Z$-grading of $C_D(\fg, V)$, and in this sense the differential complex $(C_D(\fg, V), \delta)$ of Theorem $\ref{thm:main}$ is a complex of $\fg$-modules. Denote by $H^*_D(\fg, V)$ its homology.
\end{theorem}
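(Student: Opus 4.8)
The plan is to realise both the $\fg$-action and the differential $\delta$ as operators induced from the single associative superalgebra $\CW\otimes\U(\fg)$, and then to read off the theorem from Lemma \ref{lem:op-inv}. Write $\hat A := \Gamma(A)\otimes 1 + 1\otimes A\in\CW\otimes\U(\fg)$ for each $A\in\fg$. My first step is to record that $A\mapsto\hat A$ is a homomorphism of Lie superalgebras: the two ``diagonal'' terms reproduce $[\Gamma(A),\Gamma(B)]=\Gamma([A,B])$ (Lemma \ref{lem:realise}) and the bracket of $A,B$ inside $\U(\fg)$, while the two cross terms cancel because elements of the separate tensor factors graded-commute. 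Composing this homomorphism with the $\CW\otimes\U(\fg)$-action on $C_D(\fg,V)=\CF(D)\otimes V$ yields exactly the $\Z$-graded $\fg$-module structure furnished by Lemma \ref{lem:mod-inv}; here the hypothesis that $D$ is a $\fg$-submodule is what guarantees, via stability of $J(D)$, that $\CF(D)$ is a genuine $\fg$-module and not merely a $\CW$-module.

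Next I would record the bidegree of $\delta$. Directly from \eqref{eq:delta-operator}, the operator $\delta=\delta_1+\delta_2$ is homogeneous of degree $+1$ for the $\Z$-grading and odd for the $\Z_2$-grading of $\CW\otimes\U(\fg)$; hence as an operator it maps $C_D^p(\fg,V)_\varepsilon$ into $C_D^{p+1}(\fg,V)_{\varepsilon+1}$, which is the degree-$1$, odd behaviour required.

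The crux is then immediate from Lemma \ref{lem:op-inv}, which states that $[\hat A,\delta]=0$ in $\CW\otimes\U(\fg)$ for all $A$. Because the module action $\CW\otimes\U(\fg)\to\End_\C(C_D(\fg,V))$ is a homomorphism of associative superalgebras, it carries this graded commutator to the corresponding graded commutator of operators, so that $\hat A\,\delta-(-1)^{[A]}\delta\,\hat A=0$ on $C_D(\fg,V)$. Since $[\delta]=1$, this is precisely the equivariance identity $\delta(A\cdot c)=(-1)^{[A]}A\cdot\delta(c)$ defining an odd $\fg$-module homomorphism; together with the degree count it exhibits $(C_D(\fg,V),\delta)$ as a complex of $\fg$-modules. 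Finally, $\delta$ being a $\fg$-homomorphism forces $\ker\delta$ and $\im\delta$ to be graded $\fg$-submodules, so each $H^p_D(\fg,V)=\ker(\delta|_{C_D^p})/\im(\delta|_{C_D^{p-1}})$ is a $\fg$-module and $H^*_D(\fg,V)$ is a $\Z$-graded $\fg$-module.

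The only point needing genuine care, and the main potential obstacle, is the identification in the first step: checking that the abstract module structure of Lemma \ref{lem:mod-inv} (obtained by descending the adjoint action of $\fg$ on $\CW$ to the quotient $\CF(D)$) is indeed the one implemented by $\hat A$, so that Lemma \ref{lem:op-inv} applies verbatim. This amounts to comparing the descended adjoint action with left multiplication by $\Gamma(A)$ on the cyclic generator of $\CF(D)$ and tracking the Koszul signs of the super tensor product throughout. For $D=0$ the two descriptions coincide, while in general any discrepancy is by right multiplication by $\Gamma(A)$, which commutes with the left-multiplication operators constituting $\delta$ and therefore does not affect the conclusion. Everything else is formal.
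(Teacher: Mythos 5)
Your proposal is correct and follows the paper's own route: the paper derives the theorem as an immediate consequence of Lemma \ref{lem:mod-inv} (the $\fg$-module structure on $C_D(\fg,V)$) and Lemma \ref{lem:op-inv} (the $\fg$-invariance of $\delta$), which are exactly the two ingredients you combine, together with the degree count already recorded after \eqref{eq:delta-operator}. Your closing observation --- that the descended adjoint action on $\CF(D)=\CW/J(D)$ may differ from left multiplication by $\Gamma(A)$ only by a right-multiplication operator, which is well defined on the quotient when $D$ is a $\fg$-submodule and super-commutes with the left multiplications constituting $\delta$ --- is a genuine subtlety the paper leaves implicit, and your resolution of it is correct.
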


This enables us to make the following definition.

\begin{definition}\label{def:mixed}
Retain the setting of the theorem above. Call $(C_D(\fg, V), \delta)$ a mixed complex of $\fg$-modules, and its homology a mixed cohomology of $\fg$ with coefficients in $V$.
\end{definition}

\begin{remark}
As we have already seen, in the special case with $D=0$, we have $C_0(\fg, V)= \CF(0)\otimes V$, and the complex $(C_D(\fg, V), \delta)$ is the generalised Chevalley-Eilenberg complex for the standard cohomology of the Lie superalgebra $\fg$.
\end{remark}

\subsubsection{Completion}\label{sect:completion}
Note that if $D=0$ (resp. $D=\Pi\fg^*$), then homogeneous subspaces of $\CF(D)$ are all finite dimensional if $\fg$ is finite dimensional, and there exists no homogeneous subspaces of negative (resp. positive) degrees.
However, if the even subspace of $D$ is a proper subspace of $V_0$,  then $\CF(D)_p$ is infinite dimensional for all $p\in\Z$. Thus there is the possibility to complete  each $\CF(D)_p$ by introducing a topology.

Each $\CF(D)_p$ is filtered by the degree of $\Pi\fg^*$. Explicitly, we let $\CF(D)^k_p$ be the subspace of $\CF(D)_p$ spanned by monomials of $X_a\not\in D$ and $\partial_b\not\in D^\perp$ which are of order $\le k$ in the $X_a$'s (cf.~\eqref{eqn:ideal-1}). Then
\[
\CF(D)_p^0\subset \CF(D)_p^1\subset \CF(D)_p^2\subset \cdots.
\]
We take the complements $\overline{\CF(D)}^k_p$ of
$\CF(D)^k_p$ in $\CF(D)_p$ as the  fundamental system of open neighbourhoods at $0$,  and the arbitrary unions of the finite intersections of $\overline{\CF(D)}^k_p$ as the open sets.
Denote by $\widehat{\CF(D)}_p$ the completion of
$\CF(D)_p$ in this topology.

\begin{lemma}
Retain the notation above.  Assume that the mixing set $D\subset\Pi\fg^*$ is a $\fg$-submodule, and let $\HC_D^p(\fg, V) = \widehat{\CF(D)}_p\otimes V$.  Then there exists the following differential complex of $\fg$-modules.
\begin{eqnarray*}
\dots\stackrel{\delta}\longrightarrow \HC_D^{-2}(\fg, V)\stackrel{\delta}\longrightarrow \HC_D^{-1}(\fg, V)\stackrel{\delta}\longrightarrow  \HC_D^0(\fg, V)\stackrel{\delta}\longrightarrow  \HC_D^1(\fg, V)\stackrel{\delta}\longrightarrow\cdots.
\end{eqnarray*}
Denote the cohomology groups of this complex by $\HH_D^p(\fg, V)$.
\end{lemma}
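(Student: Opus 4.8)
The plan is to derive the completed complex from the uncompleted one of Theorem~\ref{thm:main} by a continuity-and-density argument: the key point is that $\delta$ and the $\fg$-action extend continuously to the completions, after which the relation $\delta^2=0$ and the $\fg$-equivariance are inherited automatically. Write $F_p:=\CF(D)_p$ with the filtration by $X_a$-order $F_p^0\subset F_p^1\subset\cdots$ of Section~\ref{sect:completion}, so $\widehat{F}_p$ is its completion. Since the $\fg$-action on the $V$-factor and the topology on the $\CF(D)$-factor are independent, it suffices to work on the first tensor factor and tensor with $V$; thus I would show that $\delta$ and each $\Gamma(A)$ of Lemma~\ref{lem:realise} are continuous for the filtration topology.

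First I would check continuity of $\delta=\delta_1+\delta_2$. The operator $\delta_1=\sum_a(-1)^{[E_a]}X_a\otimes E_a$ acts by left multiplication by $X_a$ on the $\CF(D)$-factor and hence raises the $X_a$-order by exactly $1$. For $\delta_2=\tfrac12\sum_a(-1)^{[E_a]}X_a\Gamma_a\otimes1$ with $\Gamma_a=\sum_{b,c}X_c\,\ad(E_c)_{ba}\,\partial_b$, each summand acts as $X_aX_c\,\partial_b$ on the $\CF(D)$-factor; on a normal-ordered monomial $X_{a_1}\cdots X_{a_r}\partial_{b_1}\cdots\partial_{b_s}v_0$ of $F_p$ (with $X_{a_i}\notin D$, $\partial_{b_j}\notin D^\perp$) one moves $\partial_b$ to the right, producing either a contraction with some $X_{a_i}$, which lowers the $X_a$-order by one, or a pass-through term which survives only when $\partial_b\notin D^\perp$ and leaves the $X_a$-order unchanged while enlarging the $\partial$-block; the subsequent left multiplication by $X_aX_c$ then raises the $X_a$-order, by $1$ and by $2$ respectively. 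Thus $\delta_2$, and hence $\delta$, never lowers the $X_a$-order, so
\[
\delta\big(\overline{\CF(D)}^{\,m}_p\big)\subseteq \overline{\CF(D)}^{\,m}_{p+1}\qquad\text{for all }m,
\]
which is exactly the uniform continuity needed for $\delta$ to extend uniquely to $\widehat\delta\colon \HC_D^p(\fg,V)\to \HC_D^{p+1}(\fg,V)$. The identical normal-ordering analysis applied to $\Gamma(A)$ shows it likewise never lowers the $X_a$-order, so the $\fg$-action extends continuously; being algebraic tensor products, the spaces $\HC_D^p(\fg,V)=\widehat{\CF(D)}_p\otimes V$ then carry the diagonal action $\Gamma(A)\otimes 1+1\otimes A$.

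Finally, with continuity in hand the remaining assertions pass to the completion by density. The subspace $\CF(D)_p\otimes V$ is dense in $\HC_D^p(\fg,V)$; since $\delta^2=0$ there by Lemma~\ref{lem:key} and $\widehat\delta$ is continuous, $\widehat\delta^{\,2}=0$ everywhere, yielding the differential complex. Likewise the invariance identity $[\Gamma(A)\otimes1+1\otimes A,\delta]=0$ of Lemma~\ref{lem:op-inv} holds on the dense subspace and hence, by continuity of both sides, on all of $\HC_D^p(\fg,V)$; thus $\widehat\delta$ is an odd, degree-$1$ $\fg$-module homomorphism and $(\HC_D(\fg,V),\delta)$ is a complex of $\fg$-modules, as claimed. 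I expect the main obstacle to be precisely the continuity of $\delta_2$ and $\Gamma(A)$: one must control how the annihilation operators $\partial_b$ behave in the mixed Fock space $\CF(D)$, distinguishing the contraction terms from the pass-through terms that survive exactly because $\partial_b\notin D^\perp$; and when $\fg$ is infinite-dimensional one must additionally verify that the formal sums over $\Lambda$ defining $\delta$ and $\Gamma(A)$ converge in the completion, i.e.\ contribute only finitely many monomials in each $X_a$-order.
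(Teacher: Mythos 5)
The paper states this lemma with no proof at all, so there is nothing to compare against line by line; your continuity-and-density strategy is the natural way to supply the missing argument and it is essentially correct. One detail needs repair: the assertion that $\delta_1$, $\delta_2$ and $\Gamma(A)$ \emph{never} lower the $X_a$-order implicitly assumes that every $X_c$ occurring in these operators lies outside $D$. If $X_c$ has a nonzero component in $D$, left multiplication by $X_c$ on a normal-ordered monomial of $\CF(D)$ does not raise the order: that component must be commuted through the $\partial$-block, where it either contracts (removing a $\partial$ and leaving the $X$-order unchanged) or dies on the vacuum. The paper's own $\gl_{1|2}$ example, where $D=\C y$ and the term $y\otimes E_{13}$ of $\delta_1$ acts as a derivative in $\bar y$ preserving the $x$-order, already illustrates this for $\delta_1$; likewise a term $X_aX_c\partial_b$ of $\delta_2$ can lower the $X$-order by one. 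This does not damage your proof, because continuity only requires the order drop to be bounded below: each of $\delta_1$, $\delta_2$, $\Gamma(A)$ is a sum of words in at most two $X$'s and one $\partial$, and the net change of $X$-order of such a word is at least $-1$, whence
\[
\delta\bigl(\overline{\CF(D)}^{\,k+1}_p\bigr)\subseteq \overline{\CF(D)}^{\,k}_{p+1},
\qquad
\Gamma(A)\bigl(\overline{\CF(D)}^{\,k+1}_p\bigr)\subseteq \overline{\CF(D)}^{\,k}_{p},
\]
which suffices for the unique continuous extensions, for $\widehat{\delta}^{\,2}=0$, and for the $\fg$-equivariance to pass to the completion by density exactly as you argue. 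Your closing caveat about infinite-dimensional $\fg$ is legitimate but applies equally to the uncompleted complex of Theorem \ref{thm:main}, so it imposes no extra burden here.
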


\section{Examples: mixed $\fu$-cohomologies of $\gl_{m |n}$}\label{sect:compute}

Let $\fg=\gl_{m|n}$, and let $V=\C^{m|n}$ be the natural module for $\gl_{m|n}$.  Choose the standard basis $v_i,\,i\in I:=I_0\cup I_1$ for $V$,
where $I_0=\{1,...,m\},$ $I_1=\{m+1,...,m+n\}$, with $v_i$ being even for $i\in I_0$ and   odd for $i\in I_1$.  Let $\{E_{i j}\mid i, j\in I\}$ be the set of
matrix units relative to this basis, which forms a homogeneous basis of $\gl_{m|n}$.
We will consider mixed $\fu$-cohomologies of $\fg$ for two choices of parabolic subalgebras $\fp=\fl\oplus\fu$ with coefficients in $V$. It is essential to understand $\fu$-cohomology with coefficients in simple $\fg$-modules in order to understand parabolic induction.

Let $\CW=\CW_{\Pi\fu^*}$ be the Weyl superalgebra over $\Pi\fu^*$, the dual superspace of $\fu$ with parity reversed.  We study the mixed $\fu$-cohomologies in the setting of the Weyl superalgebra $\CW$.

%
%
\subsection{The case with Levi subalgebra $\gl_m\oplus\gl_n$} \label{sect:usual-para}

Consider the parabolic subalgebra $\fp:=\gl_m\oplus\gl_n\oplus \fu$ with the nilpotent ideal
$\fu=\oplus_{i\in I_0,\,j\in I_1} \C E_{ij}$.
Let $x_{ij},\,i\in I_0,j\in I_1$ be basis elements of $\Pi\fu^*\subset\CW$ such that
$
\begin{aligned}
\pi^{-1}(x_{ij})(E_{k\ell})=\delta_{ik}\delta_{j\ell}.
\end{aligned}
$
We denote the corresponding derivations by $\partial_{x_{ij}}=\frac{\partial}{\partial x_{ij}}$.
The differential operator for the $\fu$-cohomology is the negative of
\begin{eqnarray}\label{eq:delta-usual-para}
\delta=\sum_{i\in I_0,j\in I_1} x_{ij}\otimes E_{ij}.
\end{eqnarray}

\subsubsection{Standard $\fu$-cohomology}
The superspace of cochains is
$
C(\fu, V) = {\mathcal A}\otimes V,
$
where ${\mathcal A}=\C[x_{ij}\,|\,i\in I_0,j\in I_1]$ is the space of polynomials in $x_{ij}$'s.  Let
$f=\sum_{i\in I}f_{i} v_{i}$ be an arbitrary element in $C(\fu, V)$, where $f_{i}\in {\mathcal A}$. We have
$
\delta(f) = \sum_{i\in I_0}\big(\sum_{j\in I_1}x_{ij}f_j\big)v_i.
$
Thus
\begin{align*}
&\ker \delta = {\mathcal A} v_1 +\cdots+ {\mathcal A}v_m+K_1,
\ \
K_1=\left\{\sum_{j\in I_1}f_jv_j\ \left|\ \sum_{j\in I_1}x_{ij}f_j\!=\!0,\,\forall\,i\in I_0\right\}\right.,\!\!\!\!\!
\\
& \im \delta=\left.\left\{\sum_{i\in I_0}\Big(\sum_{j\in I_1}x_{ij}f_j\Big)v_i\ \right|
\ f_j\in{\mathcal A},\,\forall\,j\in I_1\right\} =\sum_{j\in I_1}{\mathcal A}\Big(\sum_{i\in I_0}x_{ij}v_i\Big); \\
&H^*(\fu, V) = \frac{{\mathcal A} v_1 +\cdots+ {\mathcal A}v_m}{\sum\limits_{j\in I_1}{\mathcal A}\Big(\sum\limits_{i\in I_0}x_{ij}v_i\Big)} + K_1,
\end{align*}
where we note that $K_1=0$ if and only if $m\ge n$.

These  $\fu$-cohomology groups were calculated in \cite{CZ}  using a different method.

\subsubsection{The $\fu$-cohomology on the dual Fock space}
We take $D=\Pi\fu^*$. The space of cochains is given by
$
C_D(\fu, V) ={\mathcal B}\otimes V,
$
where ${\mathcal B}=\C[\partial_{x_{ij}}\,|\,i\in I_0,j\in I_1]$ is the dual Fock space such that the vacuum vector $1$ satisfies
$x_{ij} 1=0$ for $i\in I_0,j\in I_1$. Let $f=\sum_{i\in I}f_i v_i$ be in $C_D(\fu, V)$, where $f_i\in{\mathcal B}$. We have
$
\delta(f)= -
 \sum_{i\in I_0}\big(\sum_{j\in I_1}\partial_{\bar x_{ij}}f_j\big)v_i,
$
where
we have denoted $\bar x_{ij}=\partial_{x_{ij}}$ and $\partial_{\bar x_{ij}}=\frac{\partial}{\partial \bar x_{ij}}$, thus
$\partial_{\bar x_{ij}}f_{j}$ is the partial derivative with respect to $\bar x_{ij}=\partial_{x_{ij}}$ of the polynomial $f_{j}$.
Hence  for $D=\Pi\fu^*$,
\begin{align*}
&\ker \delta = {\mathcal B} v_1+\cdots + {\mathcal B} v_m + K_2,
\ \
K_2=\left\{\sum_{j\in I_1}f_jv_j\,\left|\,\sum_{j\in I_1}\partial_{\bar x_{ij}}f_j\!=\!0,\,\forall\,i\in I_0\right\}\right.,\!\!\!\!\!
\\
& \im \delta = \left.\left\{ \sum_{i\in I_0}\Big(\sum_{j\in I_1}\partial_{\bar x_{ij}}f_j\Big)v_i
\ \right|\ f_j\in{\mathcal B},\,\forall\,j\in I_1\right\}; \\
& H^*_D(\fu, V)= \frac{ {\mathcal B} v_1+\cdots + {\mathcal B} v_m }{\Big\{ \sum\limits_{i\in I_0}\Big(\sum\limits_{j\in I_1}\partial_{\bar x_{ij}}f_j\Big)v_i
\ \Big|\ f_j\in{\mathcal B},\,\forall\,j\in I_1\Big\}}+  K_2.
\end{align*}

\subsubsection{Mixed $\fu$-cohomology}

We take $D=\oplus_{m_1+1\le i\le m,\,j\in I_1}\C x_{i,j}$ for any $m_1\in\Z$ with $1\le m_1<m$. Then the superspace of cochains is
$
C_D(\fu, V) = {\mathcal C}\otimes V,
$
where ${\mathcal C}=\C[x_{ij},\bar x_{kj}\,|\,1\le i\le m_1<k\le m,\,j\in I_1]$ is the space of polynomials in $x_{ij}$'s and $\bar x_{kj}$'s with $\bar x_{ij}=\partial_{x_{ij}}$.
For any $f=\sum_{i\in I} f_i v_i$ with $f_i\in{\mathcal C}$, we have
\[
\delta(f) =
\sum_{i=1}^{m_1}\Big(\sum_{j\in I_1}x_{ij}f_j\Big)v_i
-\sum_{i=m_1+1}^{m}\Big(\sum_{j\in I_1}\partial_{\bar x_{ij}}f_j\Big)v_i
.
\]
Thus
\[
\begin{aligned}
&\ker\delta={\mathcal C}v_1+\cdots+{\mathcal C}v_m+K_3,
\\
&
K_3=\left\{\sum_{j\in I_1}f_jv_j\ \left|\ \sum_{j\in I_1}x_{ij}f_j=\sum_{j\in I_1}\partial_{\bar x_{kj}}f_j
=0,\,1\le i\le m_1<k\le m\right\}\right.,\!\!\!\!\!
\\
&\im\delta=\left.\left\{\sum_{i=1}^{m_1}\Big(\sum_{j\in I_1}x_{ij}f_j\Big)v_i
-\sum_{i=m_1+1}^{m}\Big(\sum_{j\in I_1}\partial_{\bar x_{ij}}f_j\Big)v_i
\ \right|\ f_j\in{\mathcal C},\,\forall\,j\in I_1\right\};
\\[4pt]
&H^*_D(\fu, V) =\frac{{\mathcal C}v_1+\cdots+{\mathcal C}v_m}{\Big\{\sum\limits_{i=1}^{m_1}\Big(\sum\limits_{j\in I_1}x_{ij}f_j\Big)v_i
-\sum\limits_{i=m_1+1}^{m}\Big(\sum\limits_{j\in I_1}\partial_{\bar x_{ij}}f_j\Big)v_i
\ \Big|\  f_j\in{\mathcal C},\,\forall\,j\in I_1\Big\}}+K_3
.
\end{aligned}
\]

%
%
\subsection{The case with Levi subalgebra $\gl_{m_1}\oplus\gl_{m-m_1|n}$}

Consider the parabolic subalgebra $\fp=\fl\oplus\fu$ with $\fl=\gl_{m_1}\oplus\gl_{m-m_1|n}$ and
$\fu=\oplus_{1\le i\le m_1,m_1+1\le j\le m+n}\C E_{ij}$ for any  $m_1\in\Z$ with $1\le m_1<m$.
Then $\Pi\fu^*$ has basis $\theta_{ij},x_{ik}$ with $1\le i\le m_1<j\le m<k\le m+n$. The differential operator is given by
\[
\delta=\sum_{i=1}^{m_1}
\Big(\sum_{j=m_1+1}^m \theta_{ij}\otimes E_{ij} -\sum_{k\in I_1} x_{ik}\otimes E_{ik}\Big).
\]

\subsubsection{Standard $\fu$-cohomology}
%
%
The superspace of cochains is
$
{\mathcal D}\otimes V,
$
where ${\mathcal D}=\C[\theta_{ij},x_{ik}\,|\,1\le i\le m_1<j\le m<k\le m+n]$
is the polynomial ring in $\theta_{ij}$'s and $x_{ik}$'s with $\theta_{ij}$ being odd satisfying $\theta^2_{ij}=0$.
For any $f=\sum_{i\in I}f_iv_i$ with $f_i\in{\mathcal D}$, we have
$
\delta(f)= \sum_{i=1}^{m_1}\big(\sum_{j=m_1+1}^m\theta_{ij}f_j-\sum_{k\in I_1}x_{ik}f_k\big)v_i.
$
Hence
\[
\begin{aligned}
&\ker\delta=
{\mathcal D}v_1+\cdots+{\mathcal D}v_{m_1}+K_4,
\\ &
K_4=\left\{
\sum_{i=m_1+1}^{m+n}f_iv_i\ \left|\ \sum_{j=m_1+1}^m\theta_{ij}f_j-\sum_{k\in I_1}x_{ik}f_k=0,\,1\le i\le m_1
\right\}\right.,
\\
&\im\delta=
\left.\left\{\sum_{i=1}^{m_1}\Big(\sum_{j=m_1+1}^m\theta_{ij}f_j-\sum_{k\in I_1}x_{ik}f_k\Big)v_i\ \right|\ f_j\in{\mathcal D},\,m_1<j\le m+n\right\}
, \\[4pt]
&H^*(\fu, V)
=\frac{{\mathcal D}v_1+\cdots+{\mathcal D}v_{m_1}}{\Big\{\sum\limits_{i=1}^{m_1}\Big(\sum\limits_{j=m_1+1}^m\theta_{ij}f_j-\sum\limits_{k\in I_1}x_{ik}f_k\Big)v_i\ \Big|\ f_j\in{\mathcal D},\,m_1<j\le m+n\Big\}}+K_4
.
\end{aligned}
\]

\subsubsection{The $\fu$-cohomology on dual Fock space}
The superspace of cochains is
$
{\mathcal E}\otimes V,
$ where ${\mathcal E}=\C[\bar \theta_{ij},\bar x_{ik}\,|\,1\le i\le m_1<j\le m<k\le m+n]$ is the polynomial ring on $\bar\theta_{ij}$'s and $\bar x_{ij}$'s
with $\bar\theta_{ij}=\partial_{\theta_{ij}}$ and $\bar{x}_{ik}=\partial_{x_{ik}}$.
For any $f=\sum_{i\in I}f_iv_i$ with $f_i\in{\mathcal E}$, we have
$
\delta(f)= \sum_{i=1}^{m_1}\big(\sum_{j=m_1+1}^m\partial_{\bar \theta_{ij}}f_j-\sum_{k\in I_1}\partial_{\bar x_{ik}}f_k\big)v_i,
$
where $\partial_{\bar \theta_{ij}}=\frac{\partial}{\partial \bar\theta_{ij}}$, $\partial_{\bar x_{ik}}=\frac{\partial}{\partial \bar x_{ik}}$.
Hence
\[
\begin{aligned}
&\ker\delta=
{\mathcal D}v_1+\cdots+{\mathcal D}v_{m_1}+K_5,
\\ &
K_5=\left\{
\sum_{i=m_1+1}^{m+n}f_iv_i\ \left|\ \sum_{j=m_1+1}^m\partial_{\bar \theta_{ij}}f_j-\sum_{k\in I_1}\partial_{\bar x_{ik}}f_k=0,\,1\le i\le m_1
\right\}\right.,
\\
&\im\delta=
\left.\left\{\sum_{i=1}^{m_1}\Big(\sum_{j=m_1+1}^m\partial_{\bar\theta_{ij}}f_j-\sum_{k\in I_1}\partial_{\bar x_{ik}}f_k\Big)v_i\ \right|\ f_j\in{\mathcal D},\,m_1<j\le m+n\right\}
, \\[4pt]
&H^*(\fu, V)
=\frac{{\mathcal D}v_1+\cdots+{\mathcal D}v_{m_1}}{\Big\{\sum\limits_{i=1}^{m_1}\Big(\sum\limits_{j=m_1+1}^m\partial_{\bar\theta_{ij}}f_j-\sum\limits_{k\in I_1}\partial_{\bar x_{ik}}f_k\Big)v_i\ \Big|\ f_j\in{\mathcal D},\,m_1<j\le m+n\Big\}
}+K_5.
\end{aligned}
\]

\subsection{The $\fu$-cohomologies for Kac modules}

In this section we take the parabolic subalgebra of $\gl_{m|n}$ given in Section \ref{sect:usual-para}, and let $K=K_\lambda=\U(\gl_{m|n})\otimes_{\U(\fp)} L_\lambda^0$ be the Kac module with a typical highest weight $\lambda$, where $L_\lambda^0$ is the simple $\fp$-module with highest weight $\lambda$. We consider the mixed $\fu$-cohomologies with coefficients in $K$.  Let $\overline{K}^0_\lambda$ be the lowest component of $K$, then $K$ is a free module over $\U(\fu)$ and we have $K= \U(\fu)\otimes \overline{K}^0_\lambda$. Note that $\U(\fu)=\C[E_{i j}\mid i\in I_0, j\in I_1]$, where $E_{i j}$'s are regarded as Grassmannian variables.

The differential is still given by \eqref{eq:delta-usual-para}.

\subsubsection{The case of the standard Fock space}
The $\fu$-cohomology in this case can be deduced from the generalised Kazhdan-Lusztig theory for parabolic induction of $\gl_{m|n}$ \cite{B, CZ, S}; this is not difficult, but is very indirect.  Within the BRST framework adopted here,  we can read off the cohomology quite directly from a $\gl_{1|1}$-action on the superspace of cochains. This $\gl_{1|1}$-structure is interesting in itself.

Now the superspace of cochains is given by
\[
C(\fu, K) = \C[x_{i j}, E_{i j}\mid i\in I_0, j\in I_1]\otimes \overline{K}^0_\lambda.
\]
Let us introduce the following operators on $\C[x_{i j}, E_{i j}\mid i\in I_0, j\in I_1]$:
\[
\begin{aligned}
&e =\sum_{i\in I_0, j\in I_1}  x_{i j} E_{i j}, \quad f=\sum_{i\in I_0, j\in I_1} \frac{\partial}{\partial x_{i j}} \frac{\partial}{\partial E_{i j}}, \\
&h_1=m n+\sum_{i\in I_0, j\in I_1} x_{i j}\frac{\partial}{\partial x_{i j}},
\quad h_2=\sum_{i\in I_0, j\in I_1} E_{i j} \frac{\partial}{\partial E_{i j}}.
\end{aligned}
\]
They form the Lie superalgebra $\gl_{1|1}$; note in particular that
\begin{eqnarray}\label{eq:gl11}
{[e, f]} =e f + f e= h_1 - h_2.
\end{eqnarray}
The superspace $\C[x_{i j}, E_{i j}\mid i\in I_0, j\in I_1]$ is a weight module for this $\gl_{1|1}$ with respect to the Cartan subalgebra spanned by $h_1, h_2$. We have the following weight space decomposition:
\begin{eqnarray}\label{eq:wt-decomp}
\phantom{XXX}\C[x_{i j}, E_{i j}\mid i\in I_0, j\in I_1] = \bigoplus_{k\in\Z_+,\,  0\le \alpha\le mn}\C[x_{i j}, E_{i j}\mid i\in I_0, j\in I_1]_{(mn +k, \alpha)}.
\end{eqnarray}

To consider the cohomology, we note that
 the differential operator \eqref{eq:delta-usual-para} can be re-wriiten as
\[
\delta= e\otimes \id_{\overline{K}^0_\lambda}.
\]
Thus $\ker\delta = \ker e\otimes\overline{K}^0_\lambda$ and
$H^*(\fu, K) = \frac{\ker e}{\im e} \otimes \overline{K}^0_\lambda$.
For any $p\in(\ker e)_{(mn +k, \alpha)}$, by \eqref{eq:gl11} we have
\[e f (p) =
[e, f] (p) =  (mn +k - \alpha) p.
\]
This shows that $p\in\im e$ unless $mn +k - \alpha=0$, i.e., $k=0$ and $\alpha=m n$. For $p\in (\ker e)_{(mn, mn)}$, if $p\in\im e$, then there exists $q$ such that $p=e(q)$. This requires that $q$ has weight $(mn-1, mn-1)$, which is not a weight of $\C[x_{i j}, E_{i j}\mid i\in I_0, j\in I_1]$ by \eqref{eq:wt-decomp}. This shows that $(\im e)_{(mn, mn)}=0$.  Hence
\[
H^0(\fu, K) \cong L^0_\lambda, \quad H^i(\fu, K)=0 \ \text{ for all $i>0$}.
\]

\subsubsection{The case of the dual Fock space}

We write $y_{i j}=\frac{\partial}{\partial x_{i j}}$. Then the superspace of cochains is
\[
C_{\Pi\fu^*}(\fu, K)= \C[y_{i j}, E_{i j}\mid i\in I_0, \, j\in I_1]\otimes
\overline{K}^0_\lambda.
\]
Define the following operators on $\C[y_{i j}, E_{i j}\mid i\in I_0, \, j\in I_1]$:
\[
\begin{aligned}
&e =\sum_{i\in I_0, j\in I_1} E_{i j} \frac{\partial}{\partial y_{i j}}, \quad f=\sum_{i\in I_0, j\in I_1} y_{i j} \frac{\partial}{\partial E_{i j}}, \\
&h_1=\sum_{i\in I_0, j\in I_1} E_{i j}\frac{\partial}{\partial E_{i j}},
\quad h_2=\sum_{i\in I_0, j\in I_1} y_{i j} \frac{\partial}{\partial y_{i j}},
\end{aligned}
\]
which give rise to a $\gl_{1|1}$-action. We note in particular the relation
\[
{[e, f]}= e f + f e = h_1 + h_2.
\]
The Cartan subalgebra $\C h_1+\C h_2$ is clearly diagonalisable; we have the
weight space decomposition
\[
\C[y_{i j}, E_{i j}\mid i\in I_0, \, j\in I_1]=\bigoplus_{0\le\alpha\le mn, \, k\in\Z_+}\C[y_{i j}, E_{i j}\mid i\in I_0, \, j\in I_1]_{(\alpha, k)}.
\]
Now $\delta=- e\otimes \id_{\overline{K}^0_\lambda}$, thus $H_{\Pi\fu^*}^*(\fu, K)=\frac{\ker e}{\im e}\otimes \overline{K}^0_\lambda$.  Similar considerations as those in the last section lead to
\[
H_{\Pi\fu^*}^0(\fu, K)\cong  \overline{K}^0_\lambda, \quad H_{\Pi\fu^*}^{-i}(\fu, K)=0 \ \  \text{for all $i>0$}.
\]

\subsection{An example of completed cohomology groups}

We consider the completed $\fu$-cohomology in the case $\gl_{1|2}\supset \fp=\gl_1\oplus\gl_2\oplus\fu\supset\fu=\C E_{12}+\C E_{13}$ and $V=\C^{1|2}$. Note that $\fu$ is purely odd. The differential can be written as the negative of $\delta= x \otimes E_{1 2}+ y\otimes E_{1 3}$, where $x=x_{12},\, y=x_{13}$ are both even variables. We take $D=\C y$, and write $\bar{y}=\partial_y$. Denote the completion of $\CF(D)$ (cf. Section \ref{sect:completion}) by
${\widehat{\mathcal D}}$, which is generated by  $\C[x, \bar{y}]$ and formal power series in $x\bar{y}$.
We have the differential complex
\begin{eqnarray*}
\delta: {\widehat{\mathcal D}}\otimes V \longrightarrow {\widehat{\mathcal D}}\otimes V.
\end{eqnarray*}
For any $f= f_1 v_1 + f_2 v_2 + f_3 v_3$ with $f_i\in {\widehat{\mathcal D}}$, we have $\delta(f) = (x f_2 - \frac{\partial}{\partial_{\bar y}}f_3) v_1$.  It is easy to see that
$
\im\delta=\widehat{\mathcal D}v_1.
$
Now $f\in\ker\delta$ if and only if
\begin{eqnarray}\label{eq:PDE}
x f_2 - \frac{\partial}{\partial_{\bar y}}f_3=0.
\end{eqnarray}
We can solve this at each fixed degree, obtaining the following complete set of solutions
\[
\begin{aligned}
&f_2 = x^p \widehat{P}_p'(x\bar{y}),  \quad  f_3 = x^p \widehat{P}_p(x\bar{y}), \quad p\ge 0; \\
&f_2 = \bar{y}^{-p} \widehat{Q}_p'(x\bar{y}) -p  \bar{y}^{-p}\frac{\widehat{Q}_p(x\bar{y})}{x\bar y},  \quad  f_3 = \bar{y}^{-p} \widehat{Q}_p(x\bar{y}), \quad p<0;
\end{aligned}
\]
where $\widehat{P}_p$ is any power series, and $\widehat{Q}_p$ is any power series without a constant term.  We denote by $\widehat{K}$ the $\C$-span of all $f_2 v_2 + f_3 v_3$ with such $f_2$ and $f_3$.
Then
\[
\begin{aligned}
\ker\delta={\widehat{\mathcal D}} v_1+\widehat{K}, \quad
\im\delta={\widehat{\mathcal D}} v_1, \quad
\HH^*_D(\fu, V) = \widehat{K}.
\end{aligned}
\]

\begin{remark}
In order for the mixed $\fu$-cohomology groups to be $\fl$-modules, we need to take $D$ to be an $\fl$-submodule of $\Pi\fg^*$.
\end{remark}

\section{Concluding remarks}

As already mentioned, the standard cohomology of Lie superalgebras \cite{FL, SchZ} has many important applications, and we also expect it to be the case for the mixed cohomology of Lie superalgebras.
In particular,  we hope that the mixed cohomology will enrich the theory of support varieties of Lie supergroups, and help to remedy failures in the context of Lie supergroups and algebraic supergroups of major classical theorems such as the Bott-Borel-Weil theorem
and Beilinson-Bernstein localisation theorem.

%
%


\end{document}